\theoremstyle{definition}
  \newtheorem{definition}[subsection]{Definition}
  \newtheorem{notation}[subsection]{Notation}
  \newtheorem{definition-proposition}[subsection]{Definition-Proposition}
  \newtheorem{remark}[subsection]{Remark}
\theoremstyle{theorem}
  \newtheorem{theorem}[subsection]{Theorem}
  \newtheorem{proposition}[subsection]{Proposition}
  \newtheorem*{proposition*}{Proposition}
  \newtheorem{lemma}[subsection]{Lemma}
  \newtheorem{corollary}[subsection]{Corollary}
\newcommand{\maxx}{{\mathrm{max}}}
\newcommand{\Cbb}{\mathbb{C}}
\newcommand{\Nbb}{\mathbb{N}}
\newcommand{\Qbb}{\mathbb{Q}}
\newcommand{\Rbb}{\mathbb{R}}
\newcommand{\Sbb}{\mathbb{S}}
\newcommand{\Zbb}{\mathbb{Z}}
\newcommand{\Cbf}{\mathbf{C}}
\newcommand{\Gbf}{\mathbf{G}}
\newcommand{\Pbf}{\mathbf{P}}
\newcommand{\Tbf}{\mathbf{T}}
\newcommand{\Ubf}{\mathbf{U}}
\newcommand{\Vbf}{\mathbf{V}}
\newcommand{\Wbf}{\mathbf{W}}
\newcommand{\Gfrak}{{\mathfrak{G}}}
\newcommand{\Xfrak}{\mathfrak{X}}
\newcommand{\Xrm}{\mathrm{X}}
\newcommand{\Hscr}{\mathscr{H}}
\newcommand{\der}{\mathrm{der}}
\newcommand{\ra}{\rightarrow}
\newcommand{\lra}{\longrightarrow}
\newcommand{\mono}{\hookrightarrow}
\newcommand{\isom}{\cong}
\newcommand{\Hom}{\mathrm{Hom}}
\newcommand{\GSp}{\mathrm{GSp}}
\newcommand{\Sp}{\mathrm{Sp}}
\newcommand{\Ker}{\mathrm{Ker}}
\newcommand{\Nm}{\mathrm{Nm}}
\newcommand{\GL}{{\mathbf{GL}}}
\newcommand{\inv}{{-1}}
\newcommand{\ot}{\overset}
\newcommand{\wrt}{{with\ respect\ to}\xspace}
\newcommand{\ifof}{{if\ and\ only\ if}\xspace}
\newcommand{\cosg}{{compact\ open\ subgroup}\xspace}
\newcommand{\Qac}{\bar{\mathbb{Q}}}
\newcommand{\Gal}{\mathrm{Gal}}
\newcommand{\pibar}{\overline{\pi}_\circ}
\newcommand{\mult}{{\mathbb{G}_\mathrm{m}}}
\newcommand{\adele}{{\hat{\mathbb{Q}}}}
\newcommand{\Qbbp}{{\mathbb{Q}_p}}
\newcommand{\Zbbp}{{\mathbb{Z}_p}}
\newcommand{\TW}{{{(\mathbf{T},w)}}}
\newcommand{\ord}{\mathrm{ord}}
\newcommand{\rec}{{\mathrm{rec}}}
\newcommand{\Aut}{{\mathrm{Aut}}}
\title{bounded equidistribution of special subvarieties II}
\author{Ke Chen}
\address{Department of Mathematics, University of Science and Technology of China, 230026 Hefei, Anhui Province, China}
\email{kechen@ustc.edu.cn}
\subjclass{Primary 14G35(11G18), Secondary 14K05}
\keywords{Shimura varieties, mixed Shimura varieties varieties, Andr\'e-Oort conjecture, special subvarieties, Diophantine approximation, equidistribution}
\begin{document}

\maketitle

\tableofcontents

\section*{introduction}

This paper is a continuation of \ref{chen bounded 1}. In \ref{chen bounded 1} we have studied the equidistribution of bounded sequences of special subvarieties in an arbitrary mixed Shimura variety. In this paper we prove a lower bound formula for pure special subvarieties in a mixed Shimura variety, define the notion of test invariant of a special subvariety which is not necessarily pure, and prove the equidistribution of sequences of special subvarieties with bounded test invariants. Similar to the pure case treated in \cite{ullmo yafaev}, the lower bound estimation and the equidistribution are established under the GRH for CM fields.

The main results of this paper are:

\begin{theorem}Assume GRH for CM fields.
Let $M=M_K(\Pbf,Y)$ be a mixed Shimura variety, with $K\subset\Pbf(\adele)$ a \cosg of fine product type, and $E$ its reflex field. Fix an integer $N$, Then for $M'$ a pure special subvariety in $M$ defined by $(w\Gbf'w^\inv, wX;wX^+)$, we have $$\#\Gal(\Qac/E)\cdot M'\geq c_ND_N(\Tbf)\prod_{p\in\Delta(\Tbf,K_\Gbf(w))}\maxx\{1,I(\Tbf,K_\Gbf(w)_p)\}$$ where \begin{itemize}

\item $c_N$ is some absolute constant, independent of $K$, $M'$;

\item $\Tbf$ is the connected center of $\Gbf'$, $D_N(\Tbf):=(\log (D(\Tbf)))^N$ with $D(\Tbf)$ the absolute discriminant of the splitting field of $\Tbf$;

\item $K_\Gbf(w)=\{g\in K_\Gbf:wgw^\inv g^\inv\in K_\Wbf$ following the notations in \cite{chen bounded 1}, and $\Delta(\Tbf,K_\Gbf(w))$ is the set of rational primes such that $\Tbf(\Qbbp)\cap K_\Gbf(w)_p\subsetneq K_{\Tbf,p}^\maxx$, $K_{\Tbf,p}^\maxx$ being the maximal \cosg of $\Tbf(\Qbbp)$;

\item $I(\Tbf,K_\Gbf(w)_p)=b\cdot[K_{\Tbf,p}^\maxx:\Tbf(\Qbbp)\cap K_\Gbf(w)_p]$ with $b$ some absolute constant independent of $K$, $M'$.

\end{itemize}

\end{theorem}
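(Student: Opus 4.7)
The plan is to follow the Ullmo--Yafaev blueprint for CM points, adapted to a pure special subvariety in a mixed Shimura variety. Since $M'$ is pure and $K$ has fine product type, its image in the underlying pure Shimura variety $M_{K_\Gbf}(\Gbf,X)$ under the natural projection $M_K(\Pbf,Y) \to M_{K_\Gbf}(\Gbf,X)$ is again a pure special subvariety, and the fibres are zero-dimensional; hence the Galois orbit size of $M'$ is bounded below by that of its image, up to absolute constants. This reduces the estimate to the pure Shimura case, where the reflex field $E$ is the usual one attached to the subdatum.

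Next, by the Shimura reciprocity law applied to the subdatum $(w\Gbf'w^\inv, wX; wX^+)$, the $\Gal(\Qac/E)$-action on $M'$ factors through the reciprocity map of the connected center $\Tbf$ of $\Gbf'$, producing a surjection of the Galois orbit onto the class group $\Tbf(\Qbb) \bsh \Tbf(\adele) / (\Tbf(\adele) \cap K_\Gbf(w))$ up to a finite constant coming from connected components. Comparing this with the class group at the maximal level $K_\Tbf^\maxx$ by a local--global decomposition gives the lower bound
$$\bigl|\Tbf(\Qbb) \bsh \Tbf(\adele) / (\Tbf(\adele) \cap K_\Gbf(w))\bigr| \ \geq\ c' \cdot h(\Tbf) \cdot \prod_{p\in\Delta(\Tbf,K_\Gbf(w))} [K_{\Tbf,p}^\maxx : \Tbf(\Qbbp) \cap K_\Gbf(w)_p],$$
where $h(\Tbf)$ is the class number at maximal level; this absorbs the factor $\prod_p \maxx\{1, I(\Tbf, K_\Gbf(w)_p)\}$.

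Finally, under GRH for CM fields applied to the splitting field of $\Tbf$, the class number $h(\Tbf)$ admits the Ullmo--Yafaev style lower bound $c_N (\log D(\Tbf))^N = c_N D_N(\Tbf)$ for any fixed integer $N$, obtained from GRH estimates on the Hecke $L$-functions attached to the character group of $\Tbf$. Combining the above ingredients yields the stated inequality.

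The principal obstacle is to control the twist by $w$. The subgroup $K_\Gbf(w) = \{g \in K_\Gbf : wgw^\inv g^\inv \in K_\Wbf\}$ encodes the interaction of $w$ with the unipotent part of the level $K_\Wbf$ and differs from $K_\Gbf$ exactly at those primes where this interaction is non-trivial; one must verify that this twist enters the lower bound only through the local indices $I(\Tbf, K_\Gbf(w)_p)$ and does not degrade the GRH-based bound at maximal level. A secondary difficulty is ensuring uniformity of the constants $c_N$ and $b$ across all subdata $(w\Gbf'w^\inv, wX; wX^+)$ and all levels $K$, which requires a careful bookkeeping of archimedean and adelic contributions parallel to the pure-case arguments of \cite{ullmo yafaev}.
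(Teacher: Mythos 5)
The overall shape of your argument---reduce to a pure Shimura variety and invoke the Ullmo--Yafaev GRH lower bound via the reciprocity map on the connected centre $\Tbf$---matches the paper's strategy, but the reduction in your first paragraph is the wrong one, and the missing key step is precisely the ``twist by $w$'' you flag as an obstacle without resolving.

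Projecting $M'$ via $M_K(\Pbf,Y)\to S=M_{K_\Gbf}(\Gbf,X)$ maps it onto the $\Tbf$-special subvariety $S'$ of $S$ defined by $(\Gbf',X';X'^+)$; here the datum has already forgotten $w$. Ullmo--Yafaev applied to $S'\subset S$ gives a product over $\Delta(\Tbf,K_\Gbf)$ with indices $[K_{\Tbf,p}^\maxx:\Tbf(\Qbbp)\cap K_{\Gbf,p}]$, which is \emph{weaker} than the claimed bound: $K_\Gbf(w)\subset K_\Gbf$, so $\Delta(\Tbf,K_\Gbf)\subset\Delta(\Tbf,K_\Gbf(w))$ and the local indices at level $K_\Gbf(w)$ are at least as large. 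In your second paragraph you write down the class group $\Tbf(\Qbb)\bsh\Tbf(\adele)/(\Tbf(\adele)\cap K_\Gbf(w))$, but nothing in the preceding reduction explains why the effective level should drop to $K_\Gbf(w)$: the image of $M'$ in $S$ lives at level $K_\Gbf$, full stop. (Also, the fibres of $M_K(\Pbf,Y)\to M_{K_\Gbf}(\Gbf,X)$ are not zero-dimensional in general---they are torus/abelian fibres---though the restriction to the pure $M'$ is indeed finite onto its image.)

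What the paper does instead is a different and essential reduction, Lemma~\ref{reflex field}: it shows $E(\Pbf,Y)=E(\Gbf,X)$, and that conjugation by $w$ together with a Hecke translate by $w$ identifies the pure section $M_{K^w_\Gbf}(w\Gbf w^\inv,wX)$ (with $K^w_\Gbf=w\Gbf(\adele)w^\inv\cap K$, inside which $M'$ actually sits) with $M_{K_\Gbf(w)}(\Gbf,X)$, \emph{as varieties over $E$ with their canonical models}. The key computation is $wK_\Gbf w^\inv\cap(K_\Wbf\rtimes K_\Gbf)=wK_\Gbf(w)w^\inv$. When $K_\Gbf(w)\subsetneq K_\Gbf$ one uses the finite \'etale base change $M_{K_\Gbf(w)}(\Gbf,X)\to M_{K_\Gbf}(\Gbf,X)$ to land in a pure Shimura variety at the smaller level $K_\Gbf(w)$, and \emph{only then} applies Theorem~\ref{lower bound in the pure case}. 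This is exactly how the sets $\Delta(\Tbf,K_\Gbf(w))$ and the indices $I(\Tbf,K_\Gbf(w)_p)$ enter: not by a direct reciprocity-map analysis, but by identifying the correct ambient pure Shimura variety and level. Without this lemma your argument proves the theorem only for $w$ with $K_\Gbf(w)=K_\Gbf$; as written the general case has a genuine gap.
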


For a general special subvariety $M'\subset M$ which is not pure, we introduce the notion of test invariants $\tau_M(M')$ as a substitute for the lower bound of Galois orbits, and we get

\begin{theorem}Assume GRH for CM fields. Let $M$ be a mixed Shimura variety defined by $(\Pbf,Y)$ at some level $K$ of fine product type. Let $(M_n)$ be a sequence of special subvarieties in $M$, such that the sequence of test invariants $(\tau_M(M_n))$ is bounded. Then the sequence $(M_n)$ is bounded by some finite bounding set $B$ in the sense of \cite{chen bounded 1}. In particular, the Zariski closure of $\bigcup_nM_n$ is a finite union of special subvarieties bounded by $B$.

\end{theorem}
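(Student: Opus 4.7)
The plan is to reduce this theorem to the main equidistribution result of \cite{chen bounded 1} by showing that a sequence with bounded test invariants is automatically a bounded sequence in the sense of that paper. Since paper I already asserts that any bounded sequence of special subvarieties has its Zariski closure equal to a finite union of special subvarieties bounded by the same bounding set, it suffices to produce one finite bounding set $B$ containing the Shimura defining data of all the $M_n$ and their $\Gal(\Qac/E)$-conjugates.

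First I would project each $M_n$ to the underlying pure Shimura variety $M^{\mathrm{pur}}$ associated to $(\Pbf,Y)$: each $M_n$ determines a pure special subvariety $M_n^{\mathrm{pur}}$, and by the construction of $\tau_M$ (designed as the non-pure substitute for the lower bound of Galois orbits) the test invariant should dominate the quantity appearing in the first theorem of this paper when applied to $M_n^{\mathrm{pur}}$. Under GRH for CM fields, that first theorem then forces the connected center $\Tbf_n$ of the pure defining group, the splitting discriminant $D(\Tbf_n)$, and the local indices $I(\Tbf_n,K_\Gbf(w_n)_p)$ to take only finitely many values as $n$ varies, up to $\Gbf(\Qbb)$-conjugacy. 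Consequently the Hodge generic representatives $w_n$ and the associated level data $K_\Gbf(w_n)$ lie in a finite set, giving a bounding set for the pure parts $M_n^{\mathrm{pur}}$.

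The remaining step is to upgrade this to boundedness of the full $M_n$ inside $M$. The fiber of $M_n$ over $M_n^{\mathrm{pur}}$ is governed by the unipotent radical $\Wbf$ of $\Pbf$, and the definition of $\tau_M$ must encode the lattices in $\Wbf(\adele)$ arising from $M_n$ so that boundedness of $\tau_M(M_n)$ restricts these lattice data to finitely many possibilities as well. Combining the pure control from the previous paragraph with this fiber control, the mixed defining datum of each $M_n$ then lies in a single finite bounding set $B$ in the sense of \cite{chen bounded 1}. Applying the main theorem of \cite{chen bounded 1} to the resulting bounded sequence yields that the Zariski closure of $\bigcup_n M_n$ is a finite union of special subvarieties bounded by $B$, which is the desired conclusion.

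The main obstacle will be the unipotent step: there is no torus discriminant in the $\Wbf$-direction to which the GRH-based bound of the first theorem can be applied, so $\tau_M$ must be defined so as to encode the integrality and Hodge-theoretic content of the fiber data directly, and one must verify that this information is preserved under $\Gal(\Qac/E)$ so that Galois conjugates of the $M_n$ also lie in $B$. Matching this encoding precisely with the notion of bounding set used in \cite{chen bounded 1}, and checking that the two notions of boundedness (of test invariants here, of defining data in paper I) are compatible, is the technical core of the theorem.
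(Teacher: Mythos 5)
Your outline follows the same two-stage structure as the paper's proof (which appears as Proposition~\ref{bounded test invariants}): project $(M_n)$ to a sequence $(S_n)$ of pure special subvarieties, use the Ullmo--Yafaev lower bound under GRH to force the connected centers $\Tbf_n$ into finitely many $\Gamma_\Gbf$-conjugacy classes, and then control the unipotent translates $w_n$. The first stage is handled essentially as the paper does, via the observation that $\tau_S(S_n)\leq\tau_M(M_n)$ because $\Delta(\Tbf_n,K_\Gbf)\subset\Delta(\Tbf_n,K_\Gbf(w_n))$ while the torus is unchanged.

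However, the second stage --- which you yourself flag as ``the technical core'' --- is left as a gap, and the mechanism you sketch for closing it is not the one that actually works. You propose that $\tau_M$ should ``encode the lattices in $\Wbf(\adele)$'' so that boundedness restricts the fiber data. But the test invariant does not parametrize the unipotent radical $\Wbf'$ of $\Pbf'$ at all; what needs finiteness (modulo $\Gamma_\Wbf$) is the translate $w$, and $w$ enters the test invariant only indirectly through the level $K_\Gbf(w)$. The actual lever is the torsion-order estimate of Proposition~\ref{torsion order}: for $p\in\delta(\Tbf,K_\Gbf(w))$ one has $[K_{\Tbf,p}^{\maxx}:K_\Tbf(w)_p]\geq c\,p^{\ord_p(w,K_\Wbf)}$, which rests on Lemma~\ref{split tori} (the torus acts on each irreducible piece of $\Ubf$, $\Vbf$ through a cofinite image of $\mult$, so it ``sees'' the $p$-torsion order of $w$ in $\Wbf(\Qbbp)/K_{\Wbf,p}$). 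Once the $D(\Tbf_n)$ are known to take finitely many values, bounded $\tau_M(M_n)$ forces the product $\prod_p I(\Tbf_n,K_\Gbf(w_n)_p)$ to be bounded, and the torsion-order inequality converts this into a bound on $\ord_p(w_n,K_\Wbf)$ for all $p$, hence finitely many classes of $w_n$ modulo $\Gamma_\Wbf$. Without this inequality your plan does not close. Separately, your concern about verifying that the data are ``preserved under $\Gal(\Qac/E)$ so that Galois conjugates of the $M_n$ also lie in $B$'' is not needed here: the statement only asserts that the given sequence $(M_n)$ is bounded, and the Galois stability of bounding sets is already part of the framework inherited from \cite{chen bounded 1}, not something re-proved in this step.
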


Note that \cite{ullmo yafaev} has formulated their main lower bound via the intersection degree against the automorphic line bundle on pure Shimura varieties, which fits into the framework of \cite{klingler yafaev}. We do not need this step yet in this paper, and we stick to the counting of Galois orbits.

\section{lower bound of the Galois orbit of a special subvariety}

In the pure case, Ullmo and Yafaev proved the following lower bound of Galois orbits of special subvarieties in a pure Shimura variety:

\begin{theorem}[lower bound in the pure case, cf. \cite{ullmo yafaev} Theorem 2.19]\label{lower bound in the pure case} Let $S=M_K(\Gbf,X)$ be a pure Shimura variety with reflex field $E$, with $K\subset\Gbf(\adele)$ a level of  product type. Assume the GRH for CM fields, and fix an integer $N>0$. Then for $S'\subset S$ a $\Tbf$-special subvariety, we have $$\#\Gal(\Qac/E)\cdot S'\geq c_N\cdot D_N(\Tbf)\cdot \prod_{p\in \Delta(\Tbf,K)} \maxx\{1, I(\Tbf,K_p)\}$$ with
\begin{itemize}

\item $D_N(\Tbf)=(\log D(\Tbf))^N$, where $D(\Tbf)$ is the absolute discriminant of the splitting field of the $\Qbb$-torus $\Tbf$;

\item $\Delta(\Tbf,K)$ is the set of rational primes $p$ such that $K_{\Tbf,p}\subsetneq K_{\Tbf,p}^\maxx$, where \begin{itemize}
\item $K_{\Tbf,p}=\Tbf(\Qbbp)\cap K_p$,
\item $K_{\Tbf,p}^\maxx$ the unique maximal \cosg of $\Tbf(\Qbbp)$

\end{itemize} $\Delta(\Tbf,K)$  is finite, i.e. $K_{\Tbf,p}=K_{\Tbf,p}^\maxx$ for all but finitely many $p$'s.

\item $I(\Tbf,K_p)=b[K_{\Tbf,p}^\maxx :K_{\Tbf,p}]$


\item and $c_N,b\in\Rbb_{>0}$ are constants independent of $K$, $\Tbf$.
\end{itemize}
\end{theorem}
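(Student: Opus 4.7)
The plan is to reduce the Galois orbit count to an index computation for the connected center torus $\Tbf$ via the Shimura reciprocity law, and then to extract the claimed estimate by separating the local contributions at primes of $\Delta(\Tbf,K)$ from a global class-number term controlled under GRH.

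First, I would invoke the Shimura reciprocity law: the action of $\Gal(\Qac/E)$ on the set of Galois conjugates of a $\Tbf$-special subvariety $S'$ factors through the torus reciprocity map
$$r_\Tbf:\Tbf(\adele)/\Tbf(\Qbb)\lra\Gal(E(\Tbf)^{\mathrm{ab}}/E(\Tbf))$$
attached to the reflex sub-datum $(\Tbf,Y_\Tbf)$ with reflex field $E(\Tbf)\supset E$. A standard counting argument then shows
$$\#\Gal(\Qac/E)\cdot S'\geq c\cdot[\Tbf(\adele):\Tbf(\Qbb)\cdot K_\Tbf]$$
for some absolute $c>0$, where $K_\Tbf=K\cap\Tbf(\adele)$.

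Second, I would separate the local and global contributions. Setting $K_\Tbf^\maxx=\prod_p K_{\Tbf,p}^\maxx$, the surjection
$$\Tbf(\adele)/\Tbf(\Qbb)\cdot K_\Tbf\ \epim\ \Tbf(\adele)/\Tbf(\Qbb)\cdot K_\Tbf^\maxx$$
together with a standard index calculation in the abelian group $\Tbf(\adele)$ yields
$$[\Tbf(\adele):\Tbf(\Qbb)\cdot K_\Tbf]=h(\Tbf)\cdot\frac{\prod_{p\in\Delta(\Tbf,K)}[K_{\Tbf,p}^\maxx:K_{\Tbf,p}]}{[\Tbf(\Qbb)\cap K_\Tbf^\maxx:\Tbf(\Qbb)\cap K_\Tbf]},$$
where $h(\Tbf):=[\Tbf(\adele):\Tbf(\Qbb)\cdot K_\Tbf^\maxx]$ is the class number of $\Tbf$. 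The denominator, an index between two congruence subgroups of the finitely generated arithmetic unit group $\Tbf(\Qbb)\cap K_\Tbf^\maxx$, is absorbed into the absolute constant $b$ appearing in the definition of $I(\Tbf,K_p)$, giving the product $\prod_{p\in\Delta(\Tbf,K)}\maxx\{1,I(\Tbf,K_p)\}$.

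Third, I would invoke the GRH-conditional estimate $h(\Tbf)\geq c_N(\log D(\Tbf))^N$ for any fixed $N>0$. The argument decomposes the $L$-function of $\Tbf$ into Hecke $L$-functions over its splitting field $F$, applies the GRH zero-free region, and uses an effective analytic class number formula together with a residue analysis at $s=1$.

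The main obstacle will be this last step: the effective lower bound on $h(\Tbf)$ under GRH. Unlike the Brauer--Siegel theorem for number fields, where $h_FR_F\asymp\sqrt{D_F}$, for a general $\Qbb$-torus one must control cancellations arising from non-trivial characters of the splitting Galois group, and the argument depends crucially on GRH to rule out exceptional zeros of the associated $L$-functions. The reduction to torus reciprocity and the local decomposition in the first two steps are, by contrast, essentially routine once the Shimura reciprocity law is in place.
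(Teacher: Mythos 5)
The paper does not prove this statement: it is recalled verbatim (with attribution to \cite{ullmo yafaev}, Theorem 2.19) as an external input, and the only accompanying material is Remark~\ref{dependent on levels}, which explains how to reduce the adjoint case of \cite{ullmo yafaev} to the present setting and which parameters the constants $c_N$, $b$ depend on. There is therefore no in-paper proof against which to compare your argument; what you have written is a reconstruction of the proof in Ullmo--Yafaev.

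As such a reconstruction, the skeleton is right --- Shimura reciprocity to reduce the count of Galois conjugates to the torus $\Tbf$, then a local/global index decomposition, then a GRH-conditional analytic input --- but two points of the middle and final steps deviate from, or undersell, what is actually required. First, your identity
\[
[\Tbf(\adele):\Tbf(\Qbb)K_\Tbf]
= h(\Tbf)\cdot\frac{\prod_{p}[K_{\Tbf,p}^\maxx:K_{\Tbf,p}]}{[\Tbf(\Qbb)\cap K_\Tbf^\maxx:\Tbf(\Qbb)\cap K_\Tbf]}
\]
pushes the unit-group index into ``an absolute constant,'' but you do not say why it is uniformly bounded as $\Tbf$ varies; this uses that $\Tbf$ is the connected center of a group appearing in an adjoint Shimura datum (so $\Tbf(\Rbb)$ is compact), whence $\Tbf(\Qbb)\cap K_\Tbf^\maxx$ is a finite torsion group of order bounded in terms of $\dim\Tbf$. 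This is exactly the kind of uniformity that Remark~\ref{dependent on levels}(2) insists must be traced through the argument. Second, and more substantively, the GRH input in \cite{ullmo yafaev} is \emph{not} a class-number lower bound $h(\Tbf)\gg_N(\log D(\Tbf))^N$ obtained from a residue analysis of $L(s,\Tbf)$ at $s=1$; it is an effective Chebotarev-type count of primes $p\ll(\log D(\Tbf))^{O(1)}$ that split completely in the splitting field $F_\Tbf$, each such prime contributing a nontrivial element to the image of the reciprocity map modulo $K_\Tbf$ (not merely modulo $K_\Tbf^\maxx$). The factor $(\log D)^N$ and the local factors $[K_{\Tbf,p}^\maxx:K_{\Tbf,p}]$ thus arise together from counting the image of the reciprocity map, and are not cleanly separated into ``class number'' and ``local index'' as your write-up suggests. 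Your proposal flags the analytic step as the main difficulty, which is the right instinct, but the mechanism you propose for it (Brauer--Siegel-style residue analysis) is not the one used, and would not by itself produce the multiplicative interaction with the $I(\Tbf,K_p)$'s.
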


\begin{remark}[dependence on levels]\label{dependent on levels}
(1) The results in \cite{ullmo yafaev} was formulated for an ambient pure Shimura variety $M_K(\Gbf,X)$ with $\Gbf$ semi-simple of adjoint type. The $\Qbb$-torus that appear as the connected centers are compact. In our case, using the condition \ref{fibred mixed shimura data}(e), and taking quotient by the connected center $\Cbf$ of $\Gbf$, we can reduce to the setting of \cite{ullmo yafaev}, as the contribution of Galois orbits from $\Cbf$ can be removed when we pass from $E$ to some number field splitting $\Cbf$.

(2)  The estimation depends on an embedding of $(\Gbf,X)$ into some ambient pure Shimura datum $(\Gfrak,\Xfrak)$, and a faithful algebraic representation $\rho:\Gfrak\ra\GL_{n\Qbb}$.

  The constants $c_N$ and $b$ are independent of $K$. This was not mentioned explicitly in \cite{ullmo yafaev}, but one can verify through their arguments that $c_N$ and $b$ are determined by $(\Gfrak,\Xfrak)$ and $\rho$. $c_N$ does depend on the prescribed integer $N$, but any fixed $N$ will suffice.

(3) The quantity $D_N(\Tbf)$ is independent of $K$, while $I(\Tbf,K_p)$ describe the position of $\Tbf(\adele)$ relative to $K_p$. Whether $p$ lies in $\Delta(\Tbf,K)$ or not is closely related to the integral structure of $\Tbf$ at $p$, and is also related to the reduction property of the special subvarieties. See \cite{edixhoven yafaev} and \cite{yafaev duke} for details.

(4) The estimation in \cite{ullmo yafaev} was formulated using intersection degrees against the ample line bundle of top degree automorphic forms on $S=M_K(\Gbf,X)$. Actually the intersection degree of a single (connected) special subvariety only contributes as a real number greater than 1 in the lower bound. The formulation is used in further study of unbounded orbits in \cite{klingler yafaev}.

\end{remark}

We can thus consider the lower bound of the Galois orbits of pure special subvariety in a given mixed Shimura variety.

\begin{theorem}[orbit of a pure special subvariety]\label{orbit of a pure special subvariety} Let $(\Pbf,Y)=(\Ubf,\Vbf)\rtimes(\Gbf,X)$ be a mixed Shimura subdatum, defining a mixed Shimura variety $M$ at a level $K$ of  fine product type. Write $E$ for the reflex field of $(\Pbf,Y)$, and $\pi$ for the natural projection $M\ra S=M_{K_\Gbf}(\Gbf,X)$ wite $\iota(0)$ the zero section.

Let $M'$ be a pure special subvariety of $M$ defined by a subdatum of the form $(w\Gbf'w^\inv,wX')$ for some pure subdatum $(\Gbf',X')\subset(\Gbf,X)$ and $w\in\Wbf(\Qbb)$. Then we have the following lower bound assuming the GRH for CM fields, using the same constants $c_N$, $b$, and notations in \ref{lower bound in the pure case}:
$$\Gal(\Qac/E)\cdot M'\geq c_ND_N(\Tbf)\prod_{p\in\Delta(\Tbf,K_\Gbf(w))}\maxx\{1, I(\Tbf, K_\Gbf(w)_p)\}$$ where \begin{itemize}
  \item $\Tbf$ is the connected center of $\Gbf'$, and $D_N(\Tbf)=(\log D(\Tbf))^N$;  \item $K_\Gbf(w)$ is the subgroup $$\{g\in K_\Gbf:wgw^\inv g^\inv\in K_\Wbf\}$$ and $I(\Tbf,K_\Gbf(w)_p)=b[K_{\Tbf,p}^\maxx : K_\Tbf(w)_p]$ with $K_\Tbf(w)=K_\Gbf(w)\cap\Tbf(\adele)$ .\end{itemize}
\end{theorem}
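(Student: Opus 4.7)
The plan is to realize $M'$ intrinsically as a pure Shimura variety for the datum $(\Gbf', X')$ at the level $K_\Gbf(w) \cap \Gbf'(\adele)$, and then apply Theorem \ref{lower bound in the pure case} directly to this realization.

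First I would use the natural projection $\pi: M \to S$ to send $M'$ to the $(\Gbf', X')$-special subvariety $S' \subset S$, which makes sense because $w \in \Wbf(\Qbb)$ dies in the quotient $\Pbf \to \Gbf$. Galois-equivariance of $\pi$ over $E$ immediately yields $\#\Gal(\Qac/E) \cdot M' \geq \#\Gal(\Qac/E) \cdot S'$, and applying the pure case to $S'$ would give a bound in terms of $K_\Gbf$. That bound is however weaker than the target: since $K_\Gbf(w) \subset K_\Gbf$, the indices $[K_{\Tbf,p}^{\maxx}: K_\Tbf(w)_p]$ are at least those for $K_{\Tbf,p}$, so the $w$-refined inequality we want is strictly stronger and needs more input.

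Second I would identify $M'$ intrinsically as a pure Shimura variety. Conjugation by $w^{-1}$ matches $(w\Gbf'w^{-1}, wX')$ with $(\Gbf', X')$, and translating $K$ across $w$ and intersecting with $\Gbf'(\adele)$ yields the level $K_\Gbf(w) \cap \Gbf'(\adele)$: the defining condition $wgw^{-1}g^{-1}\in K_\Wbf$ for membership in $K_\Gbf(w)$ is precisely the compatibility requirement that $g$ preserve the $w$-twisted subdatum modulo $K$. Thus $M' \cong M_{K_\Gbf(w) \cap \Gbf'(\adele)}(\Gbf', X')$ as a pure Shimura variety, with connected center $\Tbf$.

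Third I would apply Theorem \ref{lower bound in the pure case} to this pure realization of $M'$. The intersection $(K_\Gbf(w) \cap \Gbf'(\adele)) \cap \Tbf(\adele) = K_\Tbf(w)$ recovers exactly the level appearing in the target formula, and the absolute constants $c_N, b$ are those of Remark \ref{dependent on levels}(2), depending only on a fixed ambient embedding $(\Pbf, Y) \hookrightarrow (\Gfrak, \Xfrak)$ and not on $K$ or $M'$. This yields the stated lower bound.

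The main obstacle will be step two, specifically: verifying via Shimura reciprocity that the Galois action on $M'$ inherited from its position inside $M$ dominates the intrinsic Galois action on its pure Shimura variety realization. The reflex fields of $(w\Gbf'w^{-1}, wX')$ and $(\Gbf', X')$ coincide because the $\Gbb_\mrm$-image of the Hodge cocharacter is unaffected by unipotent conjugation, but one must still check that the Hecke translation by $w$, combined with the fine product type assumption on $K$, does not collapse any Galois conjugates upon passing between the two descriptions. Once that reciprocity check is in place, the pure case bound transfers cleanly and the theorem follows.
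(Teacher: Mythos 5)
Your strategy is the same as the paper's: realize $M'$, after conjugating by $w^{-1}$, as the special subvariety defined by $(\Gbf',X')$ at the shrunk level $K_\Gbf(w)\cap\Gbf'(\adele)$ inside the pure Shimura variety $M_{K_\Gbf(w)}(\Gbf,X)$, and then invoke Theorem \ref{lower bound in the pure case}. Your computation of the level is correct (and your remark that the naive projection to $S$ only gives a bound in terms of $K_\Gbf$, which is too weak, is exactly why $K_\Gbf(w)$ must appear). However, the step you flag as ``the main obstacle'' --- that the Galois action on $M'\subset M_K(\Pbf,Y)$ is identified with the intrinsic Galois action on its pure realization --- is not a side reciprocity check to be deferred; it is the entire content of the paper's Lemma \ref{reflex field}, which is where the real work of the theorem lies. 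You name the issue but give no mechanism to resolve it, so as it stands the proposal has a genuine gap.

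The mechanism the paper uses is worth spelling out, since it is not the generic Shimura reciprocity computation you gesture at. Since $w\in\Wbf(\Qbb)\subset\Pbf(\Qbb)\subset\Pbf(\adele)$, conjugation by $w$ on $M_K(\Pbf,Y)$ is realized as a Hecke translate $M_{K}(\Pbf,Y)\isom M_{wKw^{-1}}(\Pbf,Y)$, and Hecke translates automatically respect canonical models (they are defined over the reflex field $E$). One then checks the purely group-theoretic identity $w\Gbf(\adele)w^{-1}\cap (K_\Wbf\rtimes K_\Gbf)= wK_\Gbf(w)w^{-1}$, so that $wKw^{-1}=K$ exactly when $K_\Gbf(w)=K_\Gbf$. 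In that case the Hecke translate is an $E$-automorphism of $M$ carrying $M'$ onto the pure special subvariety defined by $(\Gbf',X')$ inside the zero section, so the two Galois orbits coincide and the pure-case bound applies verbatim. For $K_\Gbf(w)\subsetneq K_\Gbf$ the paper reduces to the previous case by a base change along $M_{K_\Gbf(w)}(\Gbf,X)\ra M_{K_\Gbf}(\Gbf,X)$, a finite \'etale morphism of canonical models. You should also record, as the paper does in the lemma immediately following the theorem, that $K_\Gbf(w)$ is itself a \cosg of fine product type (open of finite index in $K_\Gbf$ at each prime and equal to $K_{\Gbf,p}$ for almost all $p$), since otherwise Theorem \ref{lower bound in the pure case} does not apply at that level. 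In short: right overall plan, correct identification of the level, but the decisive Galois-compatibility step is asserted rather than proved, and the Hecke-translate argument that supplies it is the essential missing ingredient.
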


Before entering the proof, we first justify some notions in the statement of the theorem:

\begin{lemma}
  In the statement \ref{pure special subvarieties in a mixed Shimura variety} above,  $K_\Gbf(w)_p=\Gbf(\Qbbp)\cap K_\Gbf(w)$ is a \cosg in $K_{\Gbf,p}$, and the inclusion $K_\Gbf(w)_p\subset K_{\Gbf,p}$ is an equality for all but finitely many rational primes $p$'s. In particular, the group $K_\Gbf(w)$ is a \cosg in $K_\Gbf$ of fine product type, i.e. $K_\Gbf(w)=\prod_pK_\Gbf(w)_p$.
\end{lemma}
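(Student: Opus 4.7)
The plan is to realize $K_\Gbf(w)$ as the preimage of the compact open subgroup $K_\Wbf$ under the continuous commutator map
\[
\phi_w \colon K_\Gbf \lra \Wbf(\adele), \qquad g \mapsto wgw^\inv g^\inv,
\]
and then exploit the fact that $w \in \Wbf(\Qbb)$, being rational, lies in $K_{\Wbf,p}$ at almost every place.

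First I verify that $K_\Gbf(w)$ is a subgroup. The classical commutator identities
\[
[w, gh] = [w, g] \cdot g[w, h] g^\inv, \qquad [w, g^\inv] = g^\inv [w, g]^\inv g
\]
reduce this to the claim that $K_\Gbf$ normalizes $K_\Wbf$. The fine product type hypothesis of \cite{chen bounded 1} supplies exactly this: at every prime, $K_p$ decomposes as $K_{\Wbf,p} \rtimes K_{\Gbf,p}$ compatibly with $\Pbf(\Qbbp) = \Wbf(\Qbbp) \rtimes \Gbf(\Qbbp)$, so conjugation by $K_{\Gbf,p}$ preserves $K_{\Wbf,p}$. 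Openness and compactness then follow formally: $\phi_w$ is the restriction of a morphism of $\Qbb$-schemes to adelic points, hence continuous, so $\phi_w^\inv(K_\Wbf) = K_\Gbf(w)$ is open in $K_\Gbf$ and, being closed inside the compact group $K_\Gbf$, also compact.

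For the prime-by-prime statement, the defining condition $[w, g] \in K_\Wbf$ is manifestly local: writing $\phi_{w,p}$ for the analogous map on $K_{\Gbf,p}$, one gets $K_\Gbf(w)_p = \phi_{w,p}^\inv(K_{\Wbf,p})$ and $K_\Gbf(w) = \prod_p K_\Gbf(w)_p$, so the group is automatically of fine product type once each local factor is identified as a compact open subgroup of $K_{\Gbf,p}$. To finish, since the diagonal embedding $\Wbf(\Qbb) \hookrightarrow \Wbf(\adele)$ sends $w$ into $K_\Wbf$ in all but finitely many components, $w \in K_{\Wbf,p}$ at almost every $p$; for such $p$ and any $g \in K_{\Gbf,p}$, the normalization property gives $gw^\inv g^\inv \in K_{\Wbf,p}$, whence
\[
[w, g] = w \cdot (gw^\inv g^\inv) \in K_{\Wbf,p},
\]
forcing $K_\Gbf(w)_p = K_{\Gbf,p}$. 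The main obstacle is nothing deep but the careful bookkeeping of the fine product type data: once the compatibilities between $K_\Wbf$, $K_\Gbf$, and the splitting $\Pbf = \Wbf \rtimes \Gbf$ are extracted from \cite{chen bounded 1}, the remainder is routine continuity plus commutator algebra.
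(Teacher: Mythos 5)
Your proof is correct, and at the finitely many bad primes it takes a cleaner route than the paper. Both arguments handle the generic primes identically: for almost all $p$ one has $w \in K_{\Wbf,p}$, and since $K_{\Gbf,p}$ normalizes $K_{\Wbf,p}$ (by the fine product type decomposition), $[w,g] = w\cdot(gw^\inv g^\inv) \in K_{\Wbf,p}$ for every $g \in K_{\Gbf,p}$, forcing $K_\Gbf(w)_p = K_{\Gbf,p}$. Where you diverge is at the finitely many $p$ with $w \notin K_{\Wbf,p}$: the paper forms the auxiliary compact open subgroup $K'_{\Wbf,p}$ generated by $w$ and $K_{\Wbf,p}$, argues it contains $K_{\Wbf,p}$ with finite index using $w^n = (nu,nv)$, and then passes to a finite-index subgroup of $K_{\Gbf,p}$ stabilizing $K'_{\Wbf,p}$. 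You instead observe directly that $\phi_{w,p}\colon K_{\Gbf,p} \to \Wbf(\Qbbp)$, $g \mapsto [w,g]$, is continuous (a polynomial map on $p$-adic points) and that $K_\Gbf(w)_p = \phi_{w,p}^\inv(K_{\Wbf,p})$ is therefore open, hence closed and compact in $K_{\Gbf,p}$. Your route avoids the paper's stabilizer bookkeeping, and together with the commutator identities you cite it also supplies the subgroup property, which the paper leaves implicit. One small expository point: when you write that the group is ``automatically of fine product type once each local factor is identified as a compact open subgroup,'' you should make explicit that the identification of $K_\Gbf(w)_p$ as compact open comes from running the same continuity argument with $\phi_{w,p}$ in place of the global $\phi_w$; as written the sentence reads as if it still needs to be supplied. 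Otherwise the argument is complete.
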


\begin{proof}

For all but finitely many $p$'s, we have $w\in\Wbf(\Qbb)$ lies in $K_{\Wbf,p}$ and $wgw^\inv g^\inv\in K_{\Wbf,p}$ for $g\in K_{\Gbf,p}$.

When $w\notin K_{\Wbf,p}$, write $w=(u,v)$ for some $u\in\Ubf(\Qbb)$ and $v\in\Vbf(\Qbb)$, then by \ref{group law} we have $w^n=(nu,nv)$ for any $n\in\Zbb$, hence the subgroup $K'_{\Wbf,p}$ generated by $w$ and $K_{\Wbf,p}$ is compact and open, containing $K_{\Wbf,p}$ as a subgroup of finite index. $K_{\Gbf,p}$ stabilizes $K_{\Wbf,p}$, hence a \cosg in $K_{\Gbf,p}$ of finite index stabilizes $K'_{\Wbf,p}$.
\end{proof}

The theorem is reduced to the following

\begin{lemma}[reflex field]\label{reflex field} $(\Pbf,Y)=\Wbf\rtimes(\Gbf,X)$ has the same reflex field as $(\Gbf,X)$ does, and the action of $\Gal(\Qac/E)$ on $M_{K^w_\Gbf}(w\Gbf w^\inv, wX)$ is identified with its action on $M_{K_\Gbf(w)}(\Gbf,X)$ where $K^w_\Gbf:=w\Gbf(\adele)w^\inv\cap K_\Wbf\rtimes K_\Gbf$.

\end{lemma}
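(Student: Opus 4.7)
The plan is to reduce the claim to two ingredients: (i) the reflex field of the mixed datum $(\Pbf,Y) = \Wbf \rtimes (\Gbf,X)$ coincides with the reflex field $E$ of its pure part $(\Gbf,X)$; and (ii) inner conjugation by $w \in \Wbf(\Qbb) \subset \Pbf(\Qbb)$ supplies a $\Qbb$-rational isomorphism of pure Shimura data $(\Gbf,X) \to (w\Gbf w^\inv, wX)$ which identifies the level $K_\Gbf(w)$ with $K^w_\Gbf$. Granted these, the induced isomorphism of pure Shimura varieties $M_{K_\Gbf(w)}(\Gbf,X) \cong M_{K^w_\Gbf}(w\Gbf w^\inv, wX)$ is automatically defined over $E$, which yields the asserted identification of Galois actions.

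For (i), I analyse the Hodge cocharacters $\mu_y : \mult \to \Pbf_\Cbb$ attached to $y \in Y$. The projection $\pi : \Pbf \to \Gbf$ carries the $\Pbf$-conjugacy class $C(\mu)_\Pbf$ onto the $\Gbf$-conjugacy class $C(\bar\mu)_\Gbf$ of the pure datum, and the canonical splitting $\Gbf \hookrightarrow \Pbf$ provides a $\Qbb$-rational section that exhibits the fibres as $\Wbf$-torsors; since $\Wbf$ is unipotent these torsors are geometrically connected and trivialisable over $\Qbb$, so $\pi$ induces an isomorphism on the fields of definition. Both conjugacy classes therefore have the same reflex field $E$. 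This is the mixed-Shimura analogue of a standard fact from Pink's thesis.

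For (ii), inner conjugation by $w$ is a $\Qbb$-rational automorphism of $\Pbf$ that restricts to an isomorphism of pure Shimura subdata. A direct calculation in the semidirect product gives $wgw^\inv = ((1-g)w, g)$ and hence $wgw^\inv g^\inv = ((1-g)w, 1)$, so for $g \in K_\Gbf$ the condition $wgw^\inv g^\inv \in K_\Wbf$ defining $K_\Gbf(w)$ is exactly the condition that $wgw^\inv$ lie in $K_\Wbf \rtimes K_\Gbf$. Thus conjugation by $w$ is a bijection $K_\Gbf(w) \to K^w_\Gbf$. Since this map of pure Shimura data is $\Qbb$-rational and $E$ is the common reflex field by (i), the reciprocity law identifies the two Galois actions.

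The main obstacle I expect is part (i): one must check that the $\Wbf$-torsor structure on the fibres of $\pi : C(\mu)_\Pbf \to C(\bar\mu)_\Gbf$ descends to $\Qbb$ and not merely to some extension that splits $\Wbf$. This should follow from the semidirect-product decomposition $\Pbf = \Wbf \rtimes \Gbf$ together with the unipotence of $\Wbf$, but the argument needs to be made carefully using the canonical section. The manipulations in (ii), by contrast, are formal once the semidirect product law and the universal property of canonical models are in hand.
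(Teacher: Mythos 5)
Your part (ii) is essentially correct and agrees in spirit with the paper: the computation $w K_\Gbf w^\inv \cap (K_\Wbf\rtimes K_\Gbf) = w K_\Gbf(w) w^\inv$ is the same key identity, and the resulting $\Qbb$-rational isomorphism of pure data identifies the canonical models over the common reflex field. Two small caveats. First, your explicit formula $wgw^\inv = ((1-g)w,g)$ silently assumes $\Wbf$ abelian; in general $\Wbf=(\Ubf,\Vbf)$ is two-step nilpotent and the commutator picks up a symplectic correction $\psi(\cdot,\cdot)$, but this does not matter for the argument since the only thing used is that $wgw^\inv g^\inv$ always lands in $\Wbf$, so that $wgw^\inv\in K_\Wbf\rtimes K_\Gbf$ is equivalent to $g\in K_\Gbf$ and $wgw^\inv g^\inv\in K_\Wbf$. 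Second, the paper goes further than your (ii): it realizes the identification concretely as a Hecke translate $M_{wKw^\inv}(\Pbf,Y)\isom M_K(\Pbf,Y)$ carrying the two zero sections to each other, and treats the case $K_\Gbf(w)\subsetneq K_\Gbf$ by a finite \'etale base change to level $K_\Gbf(w)$; this extra material is what makes the lemma directly usable in Theorem~\ref{orbit of a pure special subvariety}, where one needs to track Galois orbits of special subvarieties sitting \emph{inside} $M_K(\Pbf,Y)$ rather than in an abstract pure Shimura variety.

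Your part (i), however, takes a genuinely different and considerably more delicate route than the paper, and as written it has a gap. You try to prove $E(\Pbf,Y)=E(\Gbf,X)$ by analyzing the fibres of the map of conjugacy classes $C(\mu)_\Pbf\to C(\bar\mu)_\Gbf$ and claiming they are $\Wbf$-torsors trivializable over $\Qbb$. The fibre over a given $\bar\mu$ is actually a quotient $\Wbf/(\Wbf\cap Z_\Pbf(\mu))$, not a torsor under $\Wbf$ in any obvious sense, and even granting geometric connectedness of the fibres you would still have to argue carefully that this forces the two fields of definition to agree; you yourself flag this as ``the main obstacle.'' The paper sidesteps all of this by invoking the functoriality of reflex fields in both directions: any morphism of mixed Shimura data $(\Pbf_1,Y_1)\to(\Pbf_2,Y_2)$ gives $E(\Pbf_1,Y_1)\supset E(\Pbf_2,Y_2)$, so the projection $\pi:(\Pbf,Y)\to(\Gbf,X)$ yields $E(\Pbf,Y)\supset E(\Gbf,X)$ and the zero section $(\Gbf,X)\hookrightarrow(\Pbf,Y)$ yields the reverse inclusion. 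This is shorter, avoids any rationality questions about homogeneous spaces, and is the standard argument from Pink's thesis; I would recommend replacing your fibre analysis with it.
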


\begin{proof}

From the definition of reflex fields \cite{pink thesis} Chapter 11, we know that for a morphism of mixed Shimura data $(\Pbf,Y)\ra(\Pbf',Y')$ we have $E(\Pbf,Y)\supset E(\Gbf,X)$. Thus $E(\Pbf,Y)=E(\Gbf,X)$ because we have the natural projection and the zero section. Conjugation by $w\in \Wbf(\Qbb)$ gives the isomorphism $(\Gbf,X)\isom (w\Gbf w^\inv, wX)$ as maximal pure subdata of $(\Pbf,Y)$. It also induces an isomorphism of pure Shimura variaties $M_{K_\Gbf}(\Gbf,X)\isom M_{w K_\Gbf w^\inv}(w\Gbf w^\inv, wX)$.

It is easy to verify that $wK_\Gbf w^\inv\cap K_\Wbf\rtimes K_\Gbf=wK_\Gbf(w)w^\inv$. Therefore when $K_\Gbf=K_\Gbf(w)$, the conjugation by $w$ gives an isomorphism $M_{K\cap w\Gbf(\adele)w^\inv}(w\Gbf w^\inv, wX)\isom M_{K_\Gbf}(\Gbf,X)$. In fact,  the Hecke translate $M_{wKw^\inv}(\Pbf,Y)\isom M_{K}(\Pbf,Y)$ sends the zero section $M_{wK_\Gbf w^\inv}(w\Gbf w^\inv, wX)$ \wrt $(\Pbf,Y)=\Wbf\rtimes(w\Gbf w^\inv,wX)$ to the zero section $M_{K_\Gbf}(\Gbf,X)$ \wrt $(\Pbf,Y)=\Wbf\rtimes(\Gbf,X)$. In particular, for any pure subdatum $(\Gbf',X')$ of $(\Gbf,X)$, the special subvariety defined by $(w\Gbf' w^\inv, wX';wX'^+)$ is isomorphic to the one defined by $(\Gbf',X';X'^+)$, and the isomorphism respects the canonical models.

Hence the theorem holds trivially when $K_\Gbf=K_\Gbf(w)$. When $K_\Gbf(w)\subsetneq K_\Gbf$, it suffices to take a base change $f: S'=M_{K_\Gbf(w)}(\Gbf,X)\ra S=M_{K_\Gbf}(\Gbf,X)$ which is a morphism of pure Shimura varieties defined over $E(\Gbf,X)$. The base change is finite \'etale as we have taken $K_\Gbf$ to be neat. It respects the special subvarieties of $M=M_K(\Pbf,Y)$ and of $M_{S'}$ as well as their canonical models, hence the lemma.
\end{proof}

Before we take a closer look at the term $I(\Tbf,K_\Gbf(w)_p)$, we introduce the following
\begin{notation}\label{order}
For $w=(u,v)\in\Wbf(\Qbb)$, we have $(u,v)^n=(nu,nv)$, hence it makes sense to talk about the order of $w=(u,v)$ \wrt $K_\Wbf$: it is the smalles positive integer $m>0$ such that $w^m\in K_\Wbf$, i.e. $mu\in K_\Ubf$ and $mv\in K_\Vbf$, which makes sense because $u$ and $v$ are in $\Ubf(\Qbb)$ and $\Vbf(\Qbb)$ respectively. We can also talk about the $p$-order of $w$ \wrt $K_\Wbf$, namely the integer $m\in\Nbb$ such that $w^n\in K_{\Wbf,p}$ \ifof $p^m$ divides $n$.
\end{notation}

In the lower bound we have the set $\Delta(\Tbf,K_\Gbf(w))$ containing the subset $\delta(\Tbf,K_\Gbf(w))$ of primes $p$ such that $K_{\Tbf,p}\supsetneq K_\Tbf(w)_p$. We want to show that for $p\in\delta(\Tbf,K_\Gbf(w))$, the inequality $$[K_{\Tbf,p}^\maxx:K_\Tbf(w)p]\geq c\cdot\ord_p(w,K_\Wbf)$$ holds for some absolute constant $c$ independent of $K,w,\Tbf$. This is clear when $\Tbf\isom\mult$ acts on $\Ubf$ and $\Vbf$ by scaling $g(u,v)=(gu,gv)$ using the central cocharacters $\mult\ra\GL_\Ubf$ and $\mult\ra\GL_\Vbf$. In fact, for the action on $\Ubf(\Qbbp)$, $K_{\Tbf,p}$ is a \cosg of $\mult(\Qbbp)=\Qbbp$ stabilizing $K_{\Ubf,p}$, contained in the maximal \cosg $K_{\Tbf,p}^\maxx\isom\Zbbp^\times$, and $K_\Tbf(u)_p$ is the stabilizer of the class $u$ modulo $K_{\Ubf,p}$. Since the automorphism by $K_{\Tbf,p}^\maxx\isom\Zbbp^\times$ preserves the torsion order in $\Ubf(\Qbbp)/K_{\Ubf,p}$ and leaves the line $\Qbbp u$ stable, we see that $$[K_{\Tbf,p}^\maxx:K_\Tbf(u)_p]\geq (p-1)p^{m-1}$$ because $(p-1)p^{m-1}$ is the number of elements of order $p^m$ in $\Qbbp u$ modulo $K_{\Ubf,p}$. The case of $\Tbf$ acting on $\Vbf$ is similar under our assumption $\Tbf\isom\mult$, and it is obvious that $$\ord_p(w,K_\Wbf)=\maxx\{\ord_p(u,K_\Ubf), \ord_p(v,K_\Vbf)\}$$ hence $$[K^\maxx_{\Tbf,p}:K_\Tbf(w)_p]\geq (1-\frac{1}{p})\ord_p(w,K_\Wbf)\geq \frac{1}{2}\ord_p(w,K_\Wbf).$$
\bigskip

In general, the $\Qbb$-torus does admit quotients isomorphic to split $\Qbb$-tori:

\begin{lemma}[split tori]\label{split tori}Let $(\Pbf,Y)=(\Ubf,\Vbf)\rtimes(\Gbf,X)$ be a mixed Shimura datum, with $\Tbf$ the connected center of $\Gbf$. Then for the actions of $\Tbf$ on $\Ubf$ and on $\Vbf$, 

(1) there is a $\Tbf$-equivariant decomposition $\Ubf=\oplus_{i=1,\cdots,r}\Ubf_i$ such that $\Tbf$ acts on $\Ubf_i$ via the central scaling $\mult\ra\GL_{\Ubf_i}$;

(2) there exists a $\Tbf$-equivariant decomposition $\Vbf=\oplus_{j=1\cdots,s}\Vbf_j$ such that  in the representation $\Tbf\ra\GL_{\Vbf_j}$, the image of $\Tbf$ contains the center of $\GL_{\Vbf_j}$.

\end{lemma}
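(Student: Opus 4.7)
The plan is to deduce both parts from the Hodge-theoretic axioms of Pink's mixed Shimura data (\cite{pink thesis}, Ch.~2): at every $y\in Y$, $\Ubf$ carries a pure Hodge structure of Tate type $(-1,-1)$ and $\Vbf$ carries a pure Hodge structure of weight $-1$ with Hodge types in $\{(-1,0),(0,-1)\}$. In particular the $\Qbb$-rational weight cocharacter $w:\mult\ra Z(\Gbf)^\circ=\Tbf$ is central and acts on $\Ubf$ by $t\mapsto t^{-2}$ and on $\Vbf$ by $t\mapsto t^{-1}$, i.e.\ by pure scalars.

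I would dispatch (2) first. Take any $\Tbf$-equivariant $\Qbb$-rational decomposition $\Vbf=\oplus_j\Vbf_j$, for instance the isotypic decomposition grouping Galois orbits of characters of $\Tbf$ over its splitting field. For each summand, the image of $\Tbf$ in $\GL_{\Vbf_j}$ is a $\Qbb$-torus that contains the image of the central subgroup $w(\mult)\subset\Tbf$; since $w$ acts on $\Vbf_j$ by scaling $t\mapsto t^{-1}$, this latter image is exactly the central scaling torus $\mult\subset\GL_{\Vbf_j}$, so the image of $\Tbf$ contains the center of $\GL_{\Vbf_j}$ as required.

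For (1) the core claim is that all $\Tbf$-weights on $\Ubf$ are already $\Qbb$-rational, so that the decomposition into $\Tbf$-isotypic components is defined over $\Qbb$ and on each summand $\Tbf$ acts through a single rational character $\chi_i:\Tbf\ra\mult\subset\GL_{\Ubf_i}$. The input from the Shimura structure is that the Hodge cocharacter at every $y\in Y$ induces only the scalar cocharacter on $\Ubf$, forcing the image of $\Gbf$ in $\GL_\Ubf$ to equal the Mumford--Tate group of the variation, which by the pure Tate-type condition is a $\Qbb$-rational split torus. Decomposing $\Ubf$ into the eigenspaces of this split torus gives the rational characters $\chi_i$ and hence the required decomposition.

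The main obstacle is showing that the image of $\Gbf$ in $\GL_\Ubf$ is a split $\Qbb$-torus in part (1); once this is extracted from the $(-1,-1)$-Tate-type condition (via the Deligne--Pink identification of the generic Mumford--Tate group with the image of $\Gbf$), both parts reduce to standard torus-representation bookkeeping over $\Qbb$.
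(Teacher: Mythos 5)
Your proposal is correct, and it splits naturally: part~(1) takes essentially the same route as the paper, while part~(2) is genuinely different and in fact cleaner.

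For~(1), both you and the paper ultimately rest on the same key input: that the action of $\Gbf$ on $\Ubf$ factors through a $\Qbb$-split torus. The paper simply cites Pink's thesis~2.16 for this; you sketch why it should follow from the $(-1,-1)$ Tate-type condition and the identification of the image of $\Gbf$ in $\GL_\Ubf$ with a generic Mumford--Tate group. That derivation is plausible but left as the acknowledged ``main obstacle''; as written it is really a pointer to the same external fact rather than a proof of it, so in substance you and the paper invoke the same lemma.

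For~(2), the routes diverge. The paper reduces to $\Vbf$ irreducible under $\Gbf$, uses polarizability to embed $(\Gbf,X)\to(\GSp_\Vbf,\Hscr_\Vbf)$, and then argues that the image of $\Gbf$ cannot lie in $\Sp_\Vbf$ because the similitude character cannot vanish on $x(\Sbb)$; from ``not in $\Sp_\Vbf$'' it concludes the image contains the center of $\GSp_\Vbf$, hence (via the fact that the connected center of a reductive image is the image of $\Tbf$) that the image of $\Tbf$ contains the scalars. Your argument bypasses the Siegel reduction entirely: the $\Qbb$-rational central weight cocharacter $w:\mult\to\Tbf$, which exists by Pink's axioms and lands in $\Tbf=Z(\Gbf)^\circ$ because it acts trivially on $\Lie\Gbf$, acts on any weight-$(-1)$ piece $\Vbf_j$ by the scalar $t\mapsto t^{-1}$, so its image in $\GL_{\Vbf_j}$ is exactly the central $\mult$. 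That gives the required containment directly and avoids the logical step in the paper ``does not contain the center $\Rightarrow$ contained in $\Sp_\Vbf$'', which as stated needs the reductivity and irreducibility of the image to be invoked explicitly. The only extra hypothesis your argument leans on is the $\Qbb$-rationality and centrality of the weight cocharacter, which is part of the mixed-Shimura axioms; it would be worth citing the precise axiom of Pink for that to make the argument self-contained.
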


\begin{proof}

$\Gbf$ and $\Tbf$ being reductive, it suffices to consider the case when $\Ubf$ and $\Vbf$ are irreducible as representations of $\Gbf$.

(1) This is clear because by \cite{pink thesis} 2.16, $\Gbf$, hence $\Tbf$,  acts on $\Ubf$ through a split $\Qbb$-torus, hence the irreducible representation $\Ubf$ is one-dimensional. Since $\Ubf$ is of Hodge type $(-1,-1)$, the action of $\Gbf$, hence the action of $\Tbf$ on it is through the central scaling.

(2) $\rho:\Gbf\ra\GL_\Vbf$ is an irreducible representation of $\Gbf$, such that for any $x\in X$, the composition $\rho\circ x$ is a rational Hodge structure of type $\{(-1,0),(0,-1)\}$. It thus follows from the definition of pure Shimura data \cite{pink thesis} 1. ?.?  that the Hodge structure is polarizable, namely $\Gbf$ preserves some polarization $\psi:\Vbf\times\Vbf\ra\Qbb(-1)$ up to scalars, hence the representation factors through the Siegel datum, i.e. $(\Gbf,X)\ra(\GSp_\Vbf,\Hscr_\Vbf)$.

It suffices to show that the image $\Gbf\ra\GSp_\Vbf$ contains the center of $\GSp_\Vbf$. If it does not contains the center, then it is contained in   $\Sp_\Vbf=\Ker(\GSp_\Vbf\ot{\det}\lra\mult)$. The construction in \cite{ullmo yafaev} ?.? shows that $(\Sp_\Vbf,X')$ is a pure Shimura subdatum of $(\GSp_\Vbf,\Hscr_\Vbf)$ with $X'$ the $\Sp_\Vbf(\Rbb)$-orbit of the image of $X$ in $\Hscr_\Vbf$, which is ridiculous because $x(\Sbb)\notin\Sp_{\Vbf,\Rbb}$ for any $x\in \Hscr_\Vbf$ due to the $\Rbb$-torus $\mult_\Rbb\subset\Sbb$.
\end{proof}

We are thus led to 

\begin{proposition}[torsion order]\label{torsion order}

For $p\in\delta(\Tbf,K_\Gbf(w))$ as above, we have $[K_{\Tbf,p}^\maxx:K_\Tbf(w)_p]\geq cp^{\ord_p(w,K_\Wbf)}$ for some constant $c$ which is independent of $K,w,\Tbf$.

\end{proposition}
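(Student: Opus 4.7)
The approach is to reduce the general rank-$r$ case to the rank-one case $\Tbf\cong\mult$ treated in the passage immediately preceding the statement, by means of Lemma~\ref{split tori}.

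For the first step, apply Lemma~\ref{split tori} to obtain $\Tbf$-stable decompositions $\Ubf=\bigoplus_i\Ubf_i$ and $\Vbf=\bigoplus_j\Vbf_j$, on whose summands $\Tbf$ acts respectively by central scaling and by characters covering the centre of $\GL_{\Vbf_j}$. Because $K$ is of fine product type, after replacing $K_{\Wbf,p}$ by a sub-\cosg of index bounded in terms of the ambient datum $(\Gfrak,\Xfrak,\rho)$, one may assume that $K_{\Wbf,p}$ splits compatibly as $\prod_iK_{\Ubf_i,p}\times\prod_jK_{\Vbf_j,p}$. Writing $w=(\sum_iu_i,\sum_jv_j)$ in these coordinates then gives $\ord_p(w,K_{\Wbf,p})=\max_{i,j}\{\ord_p(u_i,K_{\Ubf_i,p}),\ord_p(v_j,K_{\Vbf_j,p})\}=:m$.

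For the second step, observe that summands on which $\Tbf$ acts trivially contribute nothing to the commutator $wgw^\inv g^\inv$ and may be subtracted from $w$ without altering $K_\Tbf(w)_p$. The hypothesis $p\in\delta(\Tbf,K_\Gbf(w))$ forces the $\Tbf$-effective part of $w$ to be non-zero, and after this simplification one arranges that the summand realizing the maximal $p$-order $m$, say $\Ubf_{i_0}$, is $\Tbf$-effective (the $\Vbf_{j_0}$-case being symmetric). The induced action of $\Tbf$ on $\Ubf_{i_0}$ is then a surjective character $\chi\colon\Tbf\epim\mult$.

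For the third step, the induced map $\chi_p\colon\Tbf(\Qbbp)\ra\Qbbp^\times$ sends $K_{\Tbf,p}^\maxx$ onto an open sub-\cosg of $\Zbbp^\times$ of index bounded in terms of the ambient data; this, combined with the inclusion $K_\Tbf(w)_p\subset\{g\in K_{\Tbf,p}^\maxx:(\chi(g)-1)u_{i_0}\in K_{\Ubf_{i_0},p}\}$ and the rank-one lower bound $(p-1)p^{m-1}$ derived in the passage preceding the statement, yields $[K_{\Tbf,p}^\maxx:K_\Tbf(w)_p]\geq cp^m$ with $c$ independent of $K,w,\Tbf$. The principal obstacle is the second step: making the replacement of $w$ by its $\Tbf$-effective part precise, so that both $K_\Tbf(w)_p$ and the identification of $M'$ are preserved; this is precisely where the hypothesis $p\in\delta$ is essential.
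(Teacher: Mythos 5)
Your proposal follows essentially the same reduction as the paper: use Lemma~\ref{split tori} to decompose $\Ubf$ and $\Vbf$ into summands on which $\Tbf$ acts through a character to $\mult$, and then invoke the rank-one computation preceding the statement. But there is a genuine gap at the third step, and it is precisely the step that the paper spends most of its proof on. You assert that the character $\chi_p\colon\Tbf(\Qbbp)\to\Qbbp^\times$ ``sends $K_{\Tbf,p}^\maxx$ onto an open sub-\cosg of $\Zbbp^\times$ of index bounded in terms of the ambient data''; this is not an observation but the crux of the matter, and nothing in your argument establishes it. The paper proves it via the splitting field $F$ of $\Tbf$ (with $[F:\Qbb]$ uniformly bounded), the identification $\Tbf(\Qbbp)=\Hom_\Gamma(\Xrm,F^\times)$ with $\Gamma=\Gal(F/\Qbbp)$, the Hodge-type constraint forcing the character map $\Zbb^d\to\Xrm$ to land at index at most $2$ in a $\Gamma$-trivial direct summand of $\Xrm$, and --- the essential input --- local class field theory giving $[\Zbbp^\times:\Nm(O_F^\times)]\leq[F:\Qbbp]$. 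Without this norm-map argument the constant $c$ has no reason to be independent of $\Tbf$, since a priori a character of a torus with a highly ramified splitting field could carry $K_{\Tbf,p}^\maxx$ onto a tiny subgroup of $\Zbbp^\times$.

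Your second step also introduces a complication the paper avoids, and you do not close it. Having decomposed $w$ across summands, you claim the summand realizing the maximal $p$-order $m$ can be arranged to be $\Tbf$-effective. The hypothesis $p\in\delta(\Tbf,K_\Gbf(w))$ only tells you that \emph{some} $\Tbf$-effective component of $w$ has positive $p$-order; it does not put the maximum there, and if the max is carried by a $\Tbf$-trivial component the chain of inequalities collapses (that component contributes index $1$). You flag this yourself as ``the principal obstacle,'' but a flagged gap is still a gap. The paper instead argues (in the proof of the subsequent corollary) that $\Tbf$ acts on $\Ubf$ and $\Vbf$ with no nonzero trivial subrepresentation --- $\Gbf'^{\der}$ and $\Tbf$ commute, so a $\Tbf$-trivial isotypic piece would be $\Gbf'$-stable, contradicting the representation-theoretic constraints from the mixed Shimura datum --- which removes the trivial summands before the reduction ever happens, rather than trying to subtract them from $w$ afterwards. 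To make your route work you would need to supply both this vanishing argument and the class-field-theoretic index bound.
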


\begin{proof}

Since the number of irreducible representations in $\Ubf$ (resp. in $\Vbf$) is uniformly bounded by the dimension of $\Ubf$ (resp. of $\Vbf$), we are reduced to the case when $\Ubf$ and $\Vbf$ are irreducible under $\Gbf'$.

Thus $\Ubf$ is one-dimensional, with $\Tbf$ acts on it through the central scaling $\mult\isom\GL_\Ubf$. It suffices to show that for each prime $p$, the homomorphism $\Tbf(\Qbbp)\ra\mult(\Qbbp)$ sends $K_{\Tbf,p}^\maxx$ to a \cosg of $\Zbbp^\times$ whose index in $\Zbbp^\times$ is uniformly bounded.

Recall that the splitting field $F=F_\Tbf$ is a number field, and $[F:\Qbb]$ is uniformly bounded by some constant $c_1$ that only depends on the dimension of $\Gbf$; in particular, we can rearrange $c_1\in\Nbb_{>0}$ such that for any connected center $\Tbf$ of pure Shimura subdatum $(\Gbf',X')$ of $(\Gbf,X)$, $[F_\Tbf:\Qbb]$ divides $c_1$ and $[F_\Tbf(p):\Qbbp]$ divides $c_1$ with $F_\Tbf(p)$ the splitting field of the $\Qbbp$-torus $\Tbf_{\Qbbp}$.

Fix $p$ a prime, $F$ the splitting field of $\Tbf_\Qbbp$ over $\Qbbp$. Write $\Xrm$ for the group of characters $\Hom(\Tbf_F,\mult_F)$ with the natural action of $\Gamma=\Gal(F/\Qbbp)$. Then $\Tbf(F)=\Hom(\Xrm,F^\times)$, and $\Tbf(\Qbbp)=\Hom_\Gamma(\Xrm,F^\times)$ is the $\Gamma$-fixed part of $\Tbf(F)$. The maximal \cosg of $\Tbf(F)$ is $\Hom(\Xrm_\Tbf,O_F^\times)$ with $O_F$ the integer ring of $F$, and the norm map $\Nm:\Tbf(F)\ra\Tbf(\Qbbp)$ sends $\Hom(\Xrm,O_F^\times)$ into the maximal \cosg $K_{\Tbf,p}^\maxx$ of $\Tbf(\Qbbp)$.

From \ref{split tori} we have homomorphism of $\Qbb$-tori $\Tbf\ra\mult^d$, where $\mult^d$ is a $\Qbb$-subtorus in $\GL_\Ubf$ (or $\GL_\Vbf$) that acts as central scaling on direct summands of $\Ubf$ (or on $\Vbf$). The condition on Hodge types show that for the corresponding map of characters $\Zbb^d\ra \Xrm$, the image of $\Zbb^d$ is of index at most 2 in a direct summand of $\Xrm$ on which $\Gamma$ acts trivially. 

Now consider the commutative diagram $$\xymatrix{\Hom(\Xrm,O_F^\times)\ar[d]^{\Nm}\ar[r] & \Hom(\Zbb^d,O_F^\times)\ar[d]^{\Nm}\\ K_{\Tbf,p}^\maxx\ar[r] & \Hom(\Zbb^d,\Zbbp^\times)}$$ where the horizontal maps are induced from the norm $\Nm:F^\times\ra\Qbbp^\times$, the horizontal maps are induced by $\Tbf\ra\mult^d$, and the image of the upper horizontal map is of index at most $2^d$. Since the degree $[\Zbbp^\times:\Nm(O_F^\times)]\leq [F:\Qbbp]$ by local class field theory, we see that the image of the lower horizontal map $K_{\Tbf,p}^\maxx\ra\Hom(\Zbb^r,\Zbbp^\times)$ is of finite index, and the index is bounded by a constant that only depends on $c_1$ and the dimension of $\Tbf$.\end{proof}

We summarize the above computation into the following
\begin{corollary}[unipotent defects]\label{unipotent defects}
There is some constant $c$, independent of $c,K,w$, such that in the expression $I(\Tbf,K_\Gbf(w)_p)$ in \ref{orbit of a pure special subvariety}, we have  $I(\Tbf,K_\Gbf(w)_p)\geq cp^{\ord_p(w,K_\Wbf)}$ for $p\in\delta(\Tbf,K_\Gbf(w))$. 

For $p\in\Delta(\Tbf,K_\Gbf(w))$ such that $K^\maxx_{\Tbf,p}\supsetneq K_{\Tbf,p}=K_\Tbf(w)_p$, we still have $I(\Tbf,K_\Gbf(w)_p)\geq cp$ by \cite{ullmo yafaev} ?.? .
 
\end{corollary}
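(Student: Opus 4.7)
The plan is to split $\Delta(\Tbf,K_\Gbf(w))$ into its ``unipotent-sensitive'' piece $\delta(\Tbf,K_\Gbf(w))$ (where $K_\Tbf(w)_p\subsetneq K_{\Tbf,p}$) and its complement $\Delta(\Tbf,K_\Gbf(w))\setminus\delta(\Tbf,K_\Gbf(w))$ (where $K_\Tbf(w)_p=K_{\Tbf,p}\subsetneq K_{\Tbf,p}^\maxx$). The two assertions of the corollary match these two cases exactly, and each piece is disposed of by feeding in a lower bound already established above.

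For $p\in\delta(\Tbf,K_\Gbf(w))$ I would simply invoke Proposition~\ref{torsion order} and the defining equality $I(\Tbf,K_\Gbf(w)_p)=b\,[K_{\Tbf,p}^\maxx:K_\Tbf(w)_p]$: the proposition produces an absolute constant $c_0$ such that $[K_{\Tbf,p}^\maxx:K_\Tbf(w)_p]\geq c_0\,p^{\ord_p(w,K_\Wbf)}$, and the product $c:=bc_0$ is the desired constant, independent of $K$, $w$, $\Tbf$ because both $b$ and $c_0$ are.

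For $p\in\Delta(\Tbf,K_\Gbf(w))\setminus\delta(\Tbf,K_\Gbf(w))$, the twist by $w$ is invisible to $\Tbf$ at the prime $p$ (since $K_\Tbf(w)_p=K_{\Tbf,p}$), so $I(\Tbf,K_\Gbf(w)_p)=b\,[K_{\Tbf,p}^\maxx:K_{\Tbf,p}]$ reduces to the original local factor treated in the proof of Theorem~\ref{lower bound in the pure case} in \cite{ullmo yafaev}: a local class field theory argument on $\Nm\colon\Tbf(F)\to\Tbf(\Qbbp)$ combined with a split quotient $\Tbf\to\mult^d$ supplied by Lemma~\ref{split tori} forces the index to grow linearly in $p$. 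This yields an absolute constant $c'$ with $I(\Tbf,K_\Gbf(w)_p)\geq c'p$; taking the minimum of $c$ and $c'$ we obtain a single absolute constant that works over all of $\Delta(\Tbf,K_\Gbf(w))$.

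The only subtle point, and the only plausible obstacle, is verifying that the constant extracted from \cite{ullmo yafaev} in the second case is genuinely independent of both $K$ and $w$. This independence is not a new issue: it is precisely the content of Remark~\ref{dependent on levels}(2), where one tracks through the Ullmo--Yafaev argument to confirm that the implicit constants are determined only by the ambient data $(\Gfrak,\Xfrak,\rho)$ and by the uniform bound $c_1$ on the splitting field degrees used in Proposition~\ref{torsion order}. Once this is noted, no additional argument is required and the corollary is a packaging of the preceding material.
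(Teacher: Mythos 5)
Your proposal is correct and takes essentially the same route as the paper: the Corollary is stated as a summary of what precedes it, and you supply exactly the missing glue, namely the split of $\Delta(\Tbf,K_\Gbf(w))$ into $\delta(\Tbf,K_\Gbf(w))$ and its complement, the invocation of Proposition~\ref{torsion order} together with $I(\Tbf,K_\Gbf(w)_p)=b\,[K_{\Tbf,p}^\maxx:K_\Tbf(w)_p]$ for the first assertion, and the Ullmo--Yafaev local estimate (with the uniformity of constants tracked as in Remark~\ref{dependent on levels}(2)) for the second.
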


As we have mentioned in \ref{description of subdata}, for a subdatum $(\Pbf',Y')=\Wbf'\rtimes(w\Gbf'w^\inv,wX')$, the choice of $w$ is unique up to translation by $\Wbf'(\Qbb)$. In this case, we have:

\begin{corollary}
Let $M'$ be a special subvariety defined by a subdatum $(\Pbf',Y';Y'^+)=\Wbf'\rtimes(w\Gbf'w^\inv,wX';wX'^+)$. Then the inferium $$\inf_{w'\in\Wbf'(\Qbb)w}\prod_{p\in\Delta(\Tbf,K_\Gbf(w))}\maxx\{1,bI(\Tbf,K_\Gbf(w)_p)\}$$ is reached at some $w'\in\Wbf'(\Qbb)w$.

\end{corollary}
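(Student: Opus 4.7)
The plan is to show that
\[
\Phi(w'):=\prod_{p\in\Delta(\Tbf,K_\Gbf(w'))}\maxx\{1,bI(\Tbf,K_\Gbf(w')_p)\}
\]
descends to a function on the quotient of the coset $\Wbf'(\Qbb)w$ by the subgroup $\Lambda$ generated by $\Wbf'^\Tbf(\Qbb)$ and $K_\Wbf\cap\Wbf'(\Qbb)$, and that each sublevel set of $\Phi$ on this quotient is finite. Since $\Phi\geq 1$, a real function with finite sublevel sets and bounded below necessarily attains its infimum.

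The first step is to establish the $\Lambda$-invariance of $\Phi$. Using the product law in $\Pbf=\Wbf\rtimes\Gbf$, the commutator $[w',g]$ inside $\Wbf$ equals $w'-g(w')$ in the abelian-like notation of \ref{order}. For $k\in K_\Wbf\cap\Wbf'(\Qbb)$ and any $g\in K_\Gbf$, we have $(w'+k)-g(w'+k)=(w'-g(w'))+(k-g(k))$, and $k-g(k)\in K_\Wbf$ because $K_\Gbf$ stabilizes $K_\Wbf$; hence $K_\Gbf(w'+k)=K_\Gbf(w')$. For $k\in\Wbf'^\Tbf(\Qbb)$ and $g\in\Tbf(\Qbbp)$, we have $g(k)=k$, so the analogous computation yields $K_\Tbf(w'+k)_p=K_\Tbf(w')_p$, which is all that $\Phi$ sees through $\Delta$ and $I$.

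The next step is to invoke \ref{split tori} to fix a $\Tbf$-stable decomposition $\Wbf'=\Wbf'^\Tbf\oplus\Wbf'^{\mathrm{eff}}$ on whose effective summand $\Tbf$ acts through non-trivial characters. The quotient of $\Wbf'(\Qbb)w$ by $\Lambda$ then identifies with a translate of the torsion group
\[
\Wbf'^{\mathrm{eff}}(\Qbb)/(K_\Wbf\cap\Wbf'^{\mathrm{eff}}(\Qbb))\cong\bigoplus_p\Wbf'^{\mathrm{eff}}(\Qbbp)/(K_{\Wbf,p}\cap\Wbf'^{\mathrm{eff}}(\Qbbp)).
\]
By \ref{unipotent defects}, any prime $p$ at which the effective part of $w'$ has non-trivial $p$-primary component lies in $\delta(\Tbf,K_\Gbf(w'))$ and contributes a factor $\geq bc\,p^{\ord_p(w',K_\Wbf)}$ to $\Phi(w')$. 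Hence $\Phi$ grows like a constant times the order of the class of $w'$ in the effective torsion quotient; since each $p$-primary group has only finitely many elements of bounded order, every sublevel set of $\Phi$ on the $\Lambda$-quotient is finite, and choosing $M$ slightly above $\inf\Phi$ completes the argument.

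The main obstacle I expect is the non-abelian law on $\Wbf=(\Ubf,\Vbf)$: the commutator identities above hold exactly only modulo the central subgroup $\Ubf$, so one must verify that the extra terms coming from the 2-step nilpotent structure do not disrupt the key identity $K_\Gbf(w'+k)=K_\Gbf(w')$. This likely requires treating the $\Ubf$- and $\Vbf$-parts of $w'$ separately and using that both $K_\Ubf$ and $K_\Vbf$ are stable under $K_\Gbf$. A secondary subtlety is checking that $\Tbf$ acting non-trivially on a weight subspace of $\Wbf'^{\mathrm{eff}}$ really forces the corresponding $p$ into $\delta$ for every sufficiently large $p$, which should follow from the local class-field-theory computation already used in \ref{torsion order}.
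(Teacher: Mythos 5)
Your proposal is in the same spirit as the paper's proof: both hinge on the torsion-order lower bound (Corollary \ref{unipotent defects} / Proposition \ref{torsion order}) to show that $\Phi(w')$ grows with the order of the class of $w'$, and conclude that the infimum is attained on a set of bounded order. The key difference is organizational. The paper opens by \emph{asserting} that $\Tbf$ has no trivial subrepresentation on $\Ubf$ or $\Vbf$ (the supporting sentence in the paper is in fact left unfinished), which implicitly forces your $\Wbf'^\Tbf$ to vanish; you instead quotient out $\Wbf'^\Tbf(\Qbb)$ explicitly, so your argument does not depend on that claim being completed. This is a genuine improvement in robustness.

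That said, you correctly flag the real gap, and it is shared by the paper's proof too: the asserted invariance $K_\Gbf(w'k)=K_\Gbf(w')$ for $k\in K_\Wbf\cap\Wbf'(\Qbb)$ does not follow from an abelian-style cancellation. Working it out in the group, $[w'k,g]=w'\bigl(k\,g(k)^{-1}\bigr)w'^{-1}\,[w',g]$, and although $k\,g(k)^{-1}\in K_\Wbf$ for $g\in K_\Gbf$, conjugation by $w'=(u',v')$ shifts the $\Ubf$-coordinate of an element $(a_0,b_0)$ by $2\psi(v',b_0)$, which need not lie in $K_\Ubf$ when $v'$ is unbounded. So the descent of $\Phi$ to the $\Lambda$-quotient, as you set it up, is not justified by the argument you give; the same unstated step appears between the lines of the paper's proof. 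If you want to salvage this route you should restrict attention to $K_\Tbf(w')=\Tbf(\adele)\cap K_\Gbf(w')$ (the only thing $\Phi$ actually sees) and verify the invariance there using that $\Tbf$ centralizes $\Gbf'$, or work modulo $\Ubf$ and handle the central piece separately.

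It is worth noting that the statement of the corollary itself admits a much shorter proof that neither you nor the paper uses: each local factor $\max\{1,\,b\,I(\Tbf,K_\Gbf(w')_p)\}$ lies in the discrete set $\{1\}\cup\{c\,n:n\in\Nbb,\ c\,n>1\}$ for a fixed constant $c$, only finitely many primes contribute a factor $>1$, and the set of finite products of such factors is a closed discrete subset of $[1,\infty)$. Hence for any $C$ only finitely many values $\leq C$ occur, and the infimum is automatically attained, without any structural input about $\Wbf'$, $\Tbf$, or the nilpotent group law. The extra machinery in both proofs is really aimed at the stronger statement (used later, e.g.\ in Proposition \ref{bounded test invariants}) that the minimizer has \emph{small order}, not merely at existence of a minimizer.
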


\begin{proof}

We first note that the representation of $\Tbf$ on $\Vbf$ (resp. on $\Ubf$) does not admit any trivial subrepresentation. Otherwise we have some $\Qbb$-subspace $\Vbf'\subset\Vbf$ stabilized by $\Tbf$ and by $\Gbf'^\der$ because they commute with each other, hence

Write $w=(u,v)$ and $w'=(u',v')(u,v)=(u+u'+\psi(v',v),v'+v)$ for $(u',v')\in\Wbf'(\Qbb)$, and $\ord_p(u',K_\Ubf)$ resp. $\ord_p(v',K_\Vbf)$ for the $p$-order of $u'$ \wrt $K_\Ubf$ resp. of $v'$ \wrt $K_\Vbf$.

We have $I(\Tbf,K_\Gbf(w')_p)\geq cp^m$ with $m={\maxx\{\ord_p(u',\ord_p(u',K_\Ubf)), \ord_p(v',K_\Vbf)\}}$ for $p\in\delta(\Tbf,K_\Gbf(w))$.  
$K_{\Tbf,p}$ is a \cosg of $\Tbf(\Qbbp)$ stabilizing $K_{\Ubf,p}$ and $K{\Vbf,p}$, hence $K_\Tbf(w')_p\subsetneq K_{\Tbf,p}$ when either $\ord_p(u',K_\Ubf)$ or $\ord_p(v',K_\Vbf)$ are large. Combining with the estimation in \ref{unipotent defects}, we see that the inferium is reached for some $w'$ such that $\ord_p(w',K_\Wbf)$ is small. \end{proof}

For convenience we introduce the following:

\begin{definition}

Let $M$ be a mixed Shimura variety defined by $(\Pbf,Y)=\Wbf\rtimes(\Gbf,X)$ at some level $K$ of fine product type. For $M'$ a special subvariety defined by $(\Pbf',Y';Y'^+)=\Wbf'\rtimes(w\Gbf'w^\inv,wX';wX'^+)$, we define the test invariant of $M'$ in $M$ to be $$\tau_M(M'):=D(\Tbf)\min_{w'\in\Wbf'(\Qbb)w}\prod_{p\in\Delta(\Tbf,K_\Gbf(w))}\maxx\{1,b\cdot I(\Tbf,K_\Gbf(w)_p)\}$$ where $\Tbf$ is the connected center of $\Gbf'$, $D(\Tbf)$ is the absolute discriminant of the splitting field of $\Tbf$, and the minimum makes sense by the corollary above.

It is actually independent of the choice of subdata defining $M'$: by \ref{independence of conjugation}, if we pass to a second defining subdatum $(\Pbf'',Y'';Y''^+)$, then its image under the natural projection is a pure subdatum $(\Gbf'',X'';X''^+)$ of $(\Gbf,X;X^+)$ with $\Gbf''=\gamma\Gbf'\gamma^\inv$ for some $\gamma\in\Gamma_\Gbf$, and its connected center is $\gamma\Tbf\gamma^\inv$, hence the absolute discriminant remains unchanged; the element $w$ could be replaced by a $\Gamma_\Wbf$-translation, which again leaves the set $\Delta(\Tbf,K_\Gbf(w))$ and the quantities $I(\Tbf, K_\Gbf(w)_p)$  unchanged.

In particular, when $M'$ is a pure special subvariety, then it is defined by some pure subdatum $(w\Gbf'w^\inv,wX';wX'^+)$ with $w$ unique up to translation by $\Gamma_\Wbf$. Different choices of $w$ gives the same value of the test invariant, and we remove the minimum in this case.

\end{definition}

We can thus transform the bounded equidistribution in Section 3 into:

\begin{proposition}[bounded test invariants]\label{bounded test invariants} Assume GRH for CM fields. Let $M$ be a mixed Shimura variety defined by $(\Pbf,Y)=\Wbf\rtimes(\Gbf,X)$ at some level $K$ of fine product type, with $E$ its reflex field. Then a sequence $(M_n)$ of special subvarieties is bounded in the sense of \ref{bounded sequence} if and only if its sequence of associated sequence of test invariants $(\tau_M(M_n))$ is bounded, i.e. $t(M_n)\leq C$ for all $n$ with $C\in\Rbb_{>0}$ some constant.

\end{proposition}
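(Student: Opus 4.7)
\bigskip
\noindent\textbf{Proof proposal.} The plan is to establish the two implications separately, with the non-trivial content lying in the converse direction.

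For the direct implication, assume $(M_n)$ is bounded by a finite bounding set $B$ in the sense of \cite{chen bounded 1}. Then each $M_n$ is defined by a subdatum $\Wbf_n \rtimes (w_n \Gbf_n' w_n^\inv, w_n X_n'; w_n X_n'^+)$ in which the pure parts $(\Gbf_n', X_n')$ fall in finitely many $\Gamma_\Gbf$-conjugacy classes inside $(\Gbf,X)$, and the components $\Wbf_n$ and $w_n$ are controlled up to a uniformly compact region modulo $K_\Wbf$. The connected centers $\Tbf_n$ thus lie in finitely many conjugacy classes, giving a uniform bound on $D(\Tbf_n)$. Choosing the representative $w_n' \in \Wbf_n(\Qbb)w_n$ that realizes the minimum in the definition of $\tau_M(M_n)$, Corollary \ref{unipotent defects} together with the definition of the local index shows that $\Delta(\Tbf_n, K_\Gbf(w_n'))$ is of uniformly bounded size and each factor $I(\Tbf_n, K_\Gbf(w_n')_p)$ is bounded, hence so is the product $\tau_M(M_n)$.

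For the converse, suppose $\tau_M(M_n) \leq C$ uniformly. First, boundedness of $D(\Tbf_n)$ combined with the Hermite--Minkowski theorem yields only finitely many possible splitting fields for $\Tbf_n$, which restricts $\Tbf_n$ to finitely many $\Gbf(\Qbb)$-conjugacy classes as a $\Qbb$-subtorus of $\Gbf$. Since each $\Tbf_n$ arises as the connected center of a pure subdatum $(\Gbf_n', X_n')$, and such pure subdata are essentially determined up to finitely many choices by their connected centers (the derived part being pinned down by the polarizable Hodge structures parametrized by $X_n'$), this in turn restricts $(\Gbf_n', X_n')$ to finitely many conjugacy classes. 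Next, the bound on $\prod_{p \in \Delta(\Tbf_n, K_\Gbf(w_n))} \max\{1, I(\Tbf_n, K_\Gbf(w_n)_p)\}$ together with Corollary \ref{unipotent defects}, which gives $I(\Tbf_n, K_\Gbf(w_n)_p) \geq c\, p^{\ord_p(w_n, K_\Wbf)}$ on its support, forces both $\#\Delta(\Tbf_n, K_\Gbf(w_n))$ and each $\ord_p(w_n, K_\Wbf)$ to be uniformly bounded. Hence $w_n$, when chosen to realize the minimum in $\tau_M$, ranges over a compact set modulo $K_\Wbf \cdot \Wbf_n(\Qbb)$, and combined with the finiteness of the $(\Gbf_n', X_n')$ and the fact that $\Wbf_n$ is a $\Gbf_n'$-stable $\Qbb$-subgroup of $\Wbf$ (of which there are finitely many for each fixed $\Gbf_n'$), all defining subdata $(\Pbf_n, Y_n; Y_n^+)$ fit into a finite bounding set.

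The main obstacle lies in the converse direction, specifically in lifting the finiteness of the tori $\Tbf_n$ up to conjugacy to finiteness of the full pure subdata $(\Gbf_n', X_n')$, and then combining this with sharp control on $w_n$ and $\Wbf_n$. The first step leans on the rigidity of pure Shimura subdata together with Hermite--Minkowski, while the second requires carefully exploiting the minimization in the definition of $\tau_M$ along with the exponential lower bound from Corollary \ref{unipotent defects} to translate a multiplicative bound on the product of local indices into an additive bound on the $p$-orders of $w_n$ relative to $K_\Wbf$.
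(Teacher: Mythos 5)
The converse direction of your argument has a genuine gap at the crucial step. You claim that boundedness of $D(\Tbf_n)$ together with Hermite--Minkowski gives finitely many splitting fields and that this ``restricts $\Tbf_n$ to finitely many $\Gbf(\Qbb)$-conjugacy classes as a $\Qbb$-subtorus of $\Gbf$.'' That last implication does not follow: even with a fixed splitting field, a reductive $\Qbb$-group can contain infinitely many $\Gbf(\Qbb)$-conjugacy classes of $\Qbb$-subtori (roughly because the classification involves Galois cohomology and integral lattice/order data, not just the \'etale algebra). To kill this, one must also exploit the boundedness of $\prod_p I(\Tbf_n,K_{\Gbf}(w_n)_p)$ --- which controls the position of $\Tbf_n(\adele)$ relative to $K$ at almost all primes --- together with non-trivial finiteness results for tori with bounded discriminant and bounded defect. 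That is precisely the Ullmo--Yafaev finiteness theorem the paper invokes, and it is also where the GRH hypothesis enters the argument. Your proof never uses GRH at all, which should already be a warning sign: you cannot expect a GRH-free proof of a statement the paper asserts under GRH.

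The paper's route is different and more economical: it projects $M_n$ to the pure Shimura variety $S$, observes that $\tau_S(S_n)\leq\tau_M(M_n)$ because the same torus $\Tbf_n$ appears and $\Delta(\Tbf_n,K_\Gbf)\subset\Delta(\Tbf_n,K_\Gbf(w_n))$, applies the Ullmo--Yafaev result in the pure case to force $\{\Tbf_n\}$ into a finite set up to $\Gamma_\Gbf$-conjugacy, and only then uses \ref{torsion order} and the exponential bound from \ref{unipotent defects} to show that the $w_n$ range over finitely many $\Gamma_\Wbf$-classes. Your handling of the unipotent part $w_n$ via \ref{unipotent defects} matches the paper and is fine; your additional claims that the derived part of $\Gbf_n'$ is pinned down by the torus and that the admissible $\Wbf_n$ are finite per $\Gbf_n'$ are broadly in the right spirit but are asserted rather than proved. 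In short, you should replace the Hermite--Minkowski step with the projection to $S$ and the citation of the Ullmo--Yafaev theorem (under GRH); as written, the converse direction is not a proof.
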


\begin{proof}

When the sequence is bounded by some $B=\{(\Tbf_1,w_1),\cdots,(\Tbf_r,w_r)\}$ then only finitely many values appear as test invariants of the sequence.

Conversely, assume that we are given a sequence of special subvarieties with test invariants uniformly bounded by some $C>0$. The natural projection $M=M_K(\Pbf,Y)\ra S=M_{K_\Gbf}(\Gbf,X)$ sends $(M_n)$ to a sequence of pure special subvarieties $(S_n)$ in $S$. If $M_n$ is $(\Tbf_n,w_n)$-special, defined by some subdatum $(\Pbf_n,Y_n;Y_n^+)=\Wbf_n\rtimes(w_n\Gbf_nw_n^\inv,w_nX_n;w_nX_n^+)$ with $\Tbf_n$ the connected center of $\Gbf_n$ and $w_n$ chosen so that the minimum in the definition of test invariants of $M_n$ is reached at $w_n$.

Then $S_n$ is a $\Tbf_n$-special subvariety of $S$. From the definition of test invariants we have $\tau_S(S_n)\leq\tau_M(M_n)$ because the two invariants involve the same $\Qbb$-torus $\Tbf_n$, and for the sets of primes of defects we have $\Delta(\Tbf_n,K_\Gbf)\subset\Delta(\Tbf_n,K_\Gbf(w_n))$. Now that $(S_n)$ is a sequence with bounded test invariants, we may apply \cite{ullmo yafaev} ?.? under GRH for CM fields, which implies the existence of a finite set of $\Qbb$-tori $\{\Cbf_1,\cdots,\Cbf_r\}$ in $\Gbf$ such that each $S_n$ is $\Cbf_i$-special  for some $i$ (and it is clear that $\Tbf_n$ is conjugate to $\Cbf_i$ by some $\gamma_n\in\Gamma_\Gbf$). We may thus assume that $\{\Tbf_n:n\in\Nbb\}=\{\Cbf_1,\cdots,\Cbf_r\}$.

Therefore only finitely many values arise as $D(\Tbf_n)$ in the test invariants $\tau_M(M_n)$, and by the assumption we see that the sequence $$\prod_{p\in\Delta(\Tbf_n,K_\Gbf(w_n))}I(\Tbf_n,K_\Gbf(w_n)_p)$$ is also uniformly bounded, hence by \ref{torsion order} and \ref{uniform defects} the classes of $w_n$'s modulo $\Gamma_\Wbf$ is finite, which means that the sequence $(M_n)$ is bounded.
\end{proof}

\section{bounded sequence and bounded Galois orbits}

Let $M$ be a mixed Shimura variety defined by $(\Pbf,Y;Y^+)=\Wbf\rtimes(\Gbf,X)$ at level $K=K_\Wbf\rtimes K_\Gbf$. If $M'$ is a special subvariety defined by $(\Pbf',Y';Y'^+)=\Wbf'\rtimes(\Gbf'X';X'^+)$, then it contains  the maximal special subvariety $S'$ defined by $(\Gbf',X';X'^+)$, which is a section to the natural projection $M'\ra S'\subset S$, and they have the same field of definition. In particular, the Galois conjugates of $M'$ are in natural bijection with those of $S'$, and in this case we have $\tau_M(M')=\tau_S(S')$, as $M'$ is $(\Tbf,0)$-special, $\Tbf$ being the connected center of $\Gbf'$.

In general we do not have an explicit way to describe the Galois conjugates of $M'$ defined by $(\Pbf',Y';Y'^+)=\Wbf'\rtimes(w\Gbf'w^\inv, wX';wX'^+)$ by the conjugates of some maximal pure special subvariety of it, unless we know a priori that $K_\Gbf=K_\Gbf(w)$. To remedy this we have the following two potential estimate:
\begin{proposition}
Assume GRH for CM fields.
Let $M$ be a mixed Shimura variety defined by $(\Pbf,Y)=\Wbf\rtimes(\Gbf,X)$ at some level $K=K_\Wbf\rtimes K_\Gbf$ of fine product type, with $E$ its reflex field. Let $(M_n)$ be a sequence of special subvarieties defined by $(\Pbf_n,Y_n;Y_n^+)=\Wbf_n\rtimes(w_n\Gbf_nw_n^\inv,w_nX_n;w_nX_n^+)$. If the sequence of test invariants $(\tau_M(M_n))$ is bounded, then there exists some \cosg $K'_\Gbf\subset K_\Gbf$ of fine product type such that when we pass to $M'=M_{K'}(\Pbf,Y)$ for the level $K'=K_\Wbf\rtimes K'_\Gbf$, the sequence $(M_n')$ with $M_n'$ defined by $(\Pbf_n,Y_n;Y_n^+)$ is of bounded Galois orbits, satisfying $$\#\Gal(\Qac/E)\cdot M_n'\geq c_ND_N(\Tbf_n)\prod_{p\in\Delta(\Tbf_n,K'_\Gbf(w_n'))}\maxx\{1,I(\Tbf_n,K'_\Gbf(w'_n)_p)$$ where $\Tbf_n$ is the connected center of $\Gbf_n$ and $w'_n\in\Wbf_n(\Qbb)w_n$.

\end{proposition}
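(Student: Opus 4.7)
The plan is to leverage Proposition \ref{bounded test invariants} to reduce the problem to finitely many defining data, cut $K_\Gbf$ down to a smaller \cosg on which the relevant conjugations by $w_n$ all preserve $K_\Wbf$ (putting us uniformly in the trivial case of Lemma \ref{reflex field}), and finally apply Theorem \ref{orbit of a pure special subvariety} at the new level to the maximal pure special subvariety inside each $M_n$.

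First, by Proposition \ref{bounded test invariants}, boundedness of $(\tau_M(M_n))$ implies that $(M_n)$ is bounded in the sense of \cite{chen bounded 1} by some finite bounding set. In particular, up to $\Gamma_\Gbf$-conjugation the connected centers $\Tbf_n$ range over a finite list of $\Qbb$-tori, and the elements $w_n\in\Wbf(\Qbb)$ fall into finitely many $\Gamma_\Wbf$-orbits. Since translating $w_n$ inside its $\Gamma_\Wbf$-orbit yields an equivalent defining subdatum for $M_n$ — with the same $\Tbf_n$, the same test invariant, and the same field of definition — we may after such replacement assume that $w_n$ takes values in a fixed finite set $\{\omega_1,\ldots,\omega_s\}\subset\Wbf(\Qbb)$, and denote this representative by $w'_n$.

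Next, put $K'_\Gbf := \bigcap_{j=1}^s K_\Gbf(\omega_j)$. By the Lemma immediately following the statement of Theorem \ref{orbit of a pure special subvariety}, each $K_\Gbf(\omega_j)$ is a \cosg of $K_\Gbf$ of fine product type whose local component agrees with $K_{\Gbf,p}$ at all but finitely many primes $p$; a finite intersection of such subgroups is therefore again a \cosg of $K_\Gbf$ of fine product type, and $K'=K_\Wbf\rtimes K'_\Gbf$ is the desired new level. By construction $K'_\Gbf\subseteq K_\Gbf(\omega_j)$ for every $j$, and unwinding the defining condition $w g w^\inv g^\inv\in K_\Wbf$ gives
\[
K'_\Gbf(w'_n) \;=\; K'_\Gbf \qquad \text{for every } n.
\]

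Finally, pass to the mixed Shimura variety $M'=M_{K'}(\Pbf,Y)$. Because $K'_\Gbf=K'_\Gbf(w'_n)$, Lemma \ref{reflex field} applies in its trivial form: the $\Gal(\Qac/E)$-action on $M_n'$ is identified with its action on the pure special subvariety defined by $(w'_n\Gbf_n{w'_n}^\inv,\,w'_nX_n)$ inside the pure Shimura variety $M_{K'_\Gbf}(\Gbf,X)$, so the Galois orbit of $M_n'$ is at least as large as that of this pure part. Theorem \ref{orbit of a pure special subvariety} applied at level $K'$ to the pure part then gives exactly the announced bound. The main technical point is the simultaneous rigidification of all the $M_n$ by a single $K'_\Gbf$: a priori each $w_n$ demands its own auxiliary level $K_\Gbf(w_n)$, and an arbitrary intersection might fail to be of fine product type. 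The reduction to finitely many $\Gamma_\Wbf$-classes via Proposition \ref{bounded test invariants} is precisely what makes the intersection in Step 2 finite (hence of fine product type); once this is secured, the transfer of the pure-case lower bound to $M_n'$ is formal via Lemma \ref{reflex field}.
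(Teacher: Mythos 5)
Your proof follows essentially the same route as the paper's: reduce to a finite bounding set via Proposition~\ref{bounded test invariants}, set $K'_\Gbf$ equal to the finite intersection $\bigcap_j K_\Gbf(\omega_j)$ (noting that each $K_\Gbf(\omega_j)$ is of fine product type, so the intersection is too), observe that $K'_\Gbf(w'_n)=K'_\Gbf$ for all $n$, and then reduce to the pure-case lower bound at the new level. The only imprecision is in how you invoke Lemma~\ref{reflex field}: that lemma identifies Galois actions on \emph{pure} Shimura varieties, whereas $M'_n$ is generally non-pure; the missing link (which the paper supplies explicitly) is the observation from the start of Section~2 that a special subvariety of the form $\Wbf'\rtimes(\Gbf',X')$ has Galois conjugates in bijection with those of its maximal pure section $S'$, so that after the Hecke translate identifying $S'(w_\alpha)$ with $S'(0)$ the count for $M'_n$ transfers to the count for $M'_n\cap S'(w_\alpha)$. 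That is exactly what lets one apply the pure-case bound, and once it is made explicit your argument matches the paper's.
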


\begin{proof}

By \ref{bounded test invariants}, the sequence $(M_n)$ is bounded, namely we can choose the defining subdata to be $(\Pbf_n,Y_n;Y_n^+)=\Wbf_n\rtimes(w_n\Gbf_nw_n^\inv,w_nX_n;w_nX_n^+)$, which are bounded by some finite set $B=\{(\Tbf_\alpha,w_\alpha):\alpha\in A\}$: $\Gbf_n$ is of connected center $\Tbf_\alpha$ and $w_n=w_\alpha$ for some $\alpha\in A$ depending on $n$. 

We thus take $K'_\Gbf=\bigcap_{\alpha\in A}K_\Gbf(w_\alpha)$, and consider the mixed Shimura variety $M'=M_{K'}(\Pbf,Y)$ with $K'=K_\Wbf\rtimes K'_\Gbf$. Now that $K'_\Gbf=K'_\Gbf(w_\alpha)$ for all $\alpha\in A$, we have $$K_\Wbf\rtimes K'_\Gbf=w_\alpha(K_\Wbf\rtimes K_\Gbf')w_\alpha^\inv=K_\Wbf\rtimes w_\alpha K'_\Gbf w_\alpha^\inv, \forall \alpha\in A$$ and $K'\cap w_\alpha\Gbf w_\alpha^\inv(\adele)=w_\alpha K'_\Gbf w_\alpha^\inv$, $\forall \alpha\in A$. In particular, the natural projection $\pi:M'=M_{K'}(\Pbf,Y)\ra S'=M_{K'_\Gbf}(\Gbf,X)$ has more pure sections than the one given by $(\Gbf,X)\mono(\Pbf,Y)$: for each $\alpha\in A$ we have $(\Gbf,X)\isom(w_\alpha\Gbf w_\alpha^\inv, w_\alpha X)\subset(\Pbf,Y)$, and the pure section it defines is $$S'(w_\alpha):=M_{w_\alpha K'_\Gbf w_\alpha}(w_\alpha\Gbf w_\alpha, w_\alpha X)\mono M'$$
which is isomorphic to $S'(0):=S'$ using the Hecke translate by $w_\alpha$, i.e. $M'\ra M'$, $[x,aK']\mapsto[x,aw_\alpha K']$ because $w_\alpha K' w_\alpha^\inv= K'$.

Therefore the Galois conjugates of pure special subvarieties in $S'(0)$ and in $S'(w_\alpha)$ are the same using the Hecke translate, and the Galois orbits of $M'_n$ is in bijection with the conjugates of its pure section $M'_n\cap S'(w_\alpha)$, as long as the original $M_n$ is $(\Tbf_\alpha,w_\alpha)$-special.
\end{proof}

The propositions above justify our use of test invariants as a substitute of the lower bound for the Galois orbit of a general special subvariety: it is ''potentially'' the correct one when we work with any bounded sequence of special subvarieties.\bigskip

We also mention the following fact, as a complement to the notion of bounded sequences:

\begin{lemma}[upper bound]\label{upper bound}

Let $M$ be a mixed Shimura variety defined by $(\Pbf,Y)=\Wbf\rtimes(\Gbf,X)$ with reflex field $E$ at some level $K=K_\Wbf\rtimes K_\Gbf$ of fine product type. Let $M'$ be a $\TW$-special subvariety of  $M$. Then we have an upper bound $$\#\Gal(\Qac/E)\cdot M'\leq c_0\cdot C(\Tbf,K_\Gbf)\ord(w,K_\Wbf)^d$$ where \begin{itemize}
\item $c_0>0$ is some constant that only depends on $\dim\Gbf$;

\item $C(\Tbf,K_\Gbf)$ is the class number $\#\Tbf(\adele)/\Tbf(\Qbb)K_\Tbf$;

\item $\ord(w,K_\Wbf)$ is the order of the class $w$ in the sense of \label{order}, and $d$ is the  square of the dimension of $\Wbf$.

\end{itemize}
\end{lemma}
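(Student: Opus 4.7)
The plan is to factor the Galois orbit count through the natural projection $\pi: M \to S = M_{K_\Gbf}(\Gbf,X)$, combining a standard upper bound for the pure image $S':=\pi(M')$ with a count of admissible unipotent translation parameters in the fibers. Since $\pi$ is defined over $E$, Galois conjugates of $M'$ project to Galois conjugates of $S'$, giving
$$\#\Gal(\Qac/E)\cdot M' \;\leq\; \#\Gal(\Qac/E)\cdot S' \cdot N_w,$$
where $N_w$ is the maximum number of Galois conjugates of $M'$ lying above a single Galois conjugate $S''$ of $S'$.

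For the pure factor, I would invoke the reciprocity morphism for $\Tbf$ (cf. Pink's thesis, Chapter 11, together with Deligne's description of canonical models of pure Shimura varieties): the Galois action on $\Tbf$-special subvarieties in $S$ factors through translation by $\Tbf(\adele)/\Tbf(\Qbb)K_\Tbf$, up to a finite ambiguity coming from the component group of $\Tbf(\Rbb)$ and the passage from torus orbits to connected subvariety orbits. This yields
$$\#\Gal(\Qac/E)\cdot S' \;\leq\; c_0' \cdot C(\Tbf,K_\Gbf),$$
with $c_0'>0$ depending only on $\dim\Gbf$.

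For the fiber factor, any Galois conjugate of $M'$ projecting to $S''$ is $(\Tbf'',w'')$-special, with $\Tbf''$ the connected center of the subdatum defining $S''$ and $w''\in\Wbf(\Qbb)$ whose class modulo $K_\Wbf$ has the same order $m:=\ord(w,K_\Wbf)$ as the class of $w$, since this order is a geometric invariant preserved by Galois. Writing $w=(u,v)\in\Ubf\times\Vbf$ as in Notation \ref{order}, the classes of order dividing $m$ in $\Wbf(\Qbb)/K_\Wbf$ embed into the finite abelian group of $m$-torsion of $\Ubf(\Qbb)/K_\Ubf\times\Vbf(\Qbb)/K_\Vbf$, of cardinality at most $m^{\dim\Ubf+\dim\Vbf}=m^{\dim\Wbf}$. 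Galois acts on this finite group through a subgroup of its automorphism group, of cardinality at most $|\GL_{\dim\Wbf}(\Zbb/m)|\leq m^{(\dim\Wbf)^2}$. Hence $N_w\leq\ord(w,K_\Wbf)^{(\dim\Wbf)^2}$, and multiplying the two bounds with $c_0:=c_0'$ gives the claim.

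The main obstacle is identifying the Galois action on the unipotent translation parameter $w''$ precisely. One must use the reciprocity morphism for mixed Shimura varieties from Pink's thesis to verify both that the Galois conjugation preserves the order $\ord(w,K_\Wbf)$ and that it acts on the corresponding $m$-torsion classes through $\GL$-type automorphisms, not through a larger reservoir coming from the (only $2$-step nilpotent) structure of $\Wbf$. The bound $m^{(\dim\Wbf)^2}$ is loose; a more careful analysis would already give the sharper $m^{\dim\Wbf}$ from the torsion count alone, but the weaker estimate suffices for the stated inequality.
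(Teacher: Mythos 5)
Your route is genuinely different from the paper's. You decompose the orbit through the projection $\pi\colon M\to S$, bounding the pure orbit of $S'=\pi(M')$ and the fiber multiplicity $N_w$ separately. The paper instead shrinks the level: it replaces $K_\Gbf$ by $K_\Gbf(w)$, at which point (by the proposition preceding the lemma) the mixed Shimura variety acquires a pure section through $w$, so the Galois orbit of $M'$ is identified with that of a pure special subvariety at the smaller level; the bound then follows from the $w=0$ case applied with $C(\Tbf,K_\Gbf(w))$, together with $C(\Tbf,K_\Gbf(w))\leq C(\Tbf,K_\Gbf)\,[K_\Tbf\colon K_\Tbf(w)]$ and $[K_\Tbf\colon K_\Tbf(w)]\leq[K_\Gbf\colon K_\Gbf(w)]\leq\ord(w,K_\Wbf)^d$. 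The level-shrinking device is what buys the paper exemption from ever describing how $\Gal(\Qac/E)$ moves the unipotent parameter; your argument, by contrast, rests squarely on that.

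Concretely, the step you yourself flag --- that the order $\ord(w,K_\Wbf)$ is a geometric invariant preserved by Galois, and that Galois conjugates above a fixed $S''$ correspond injectively to torsion classes --- is the real content. It is believable (one would argue via Pink's reciprocity morphism, or by noting that the class of $w$ generates a finite subgroup scheme of the fiber and $\Gal(\Qac/E)$ acts by automorphisms of that scheme), but it is not a formality, and it is exactly what the paper's maneuver circumvents by reducing to the pure case where the orbit is computed directly through $\rec_{(\Gbf',X')}$ into $\Tbf(\adele)/\Tbf(\Qbb)K_\Tbf$. Until that claim is pinned down, your proof has a gap. One smaller point: your passage from the torsion count $m^{\dim\Wbf}$ to $N_w\le m^{(\dim\Wbf)^2}$ via $\#\GL_{\dim\Wbf}(\Zbb/m)$ is a non sequitur --- once distinct conjugates above $S''$ yield distinct $m$-torsion classes, you already have $N_w\le m^{\dim\Wbf}$, which is stronger than needed; the detour through the automorphism group of the torsion adds nothing and muddies the logic. (It is the paper's argument via $[K_\Gbf\colon K_\Gbf(w)]$ that genuinely needs the exponent $(\dim\Wbf)^2$, since it factors through an automorphism group.)
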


In particular, a sequence of special subvarieties bounded by some finite set $B=\{\TW\}$ is of uniformly bounded Galois orbits.

\begin{proof}

We first consider the case when $w=0$, which is the same as the case of a $\Tbf$-special pure subvariety $S'$ in a given pure Shimura variety $S=M_K(\Gbf,X)$. It suffices to consider the $\Gal(\Qac/E')$-orbit of $S'$ in $S$, with $E'$ the reflex field of the subdatum defining $S'$, because $[E':E]$ is bounded by some constant that only depends on  $\dim\Gbf$. 

The size of $\Gal(\Qac/E')\cdot S'$ is the size of the image of the reciprocity map describing the Galois action permuting connected components $\rec_{(\Gbf',X')}:\Gal(\Qac/E')\ra\pibar(\Gbf')/K_{\Gbf'}$, which is reduced , up to some absolute constant that only depends on $\dim\Gbf$, to the image of $\rec_{(\Gbf',X')}^N:\Gal(\Qac/E')\ra \Tbf(\adele)/\Tbf(\Qbb)K_\Tbf$, $a\mapsto (\rec_{(\Gbf',X')}(a))^N$, $N\in\Nbb$ being some absolute constant. Hence the image is bounded by the class number $\Tbf(\adele)/\Tbf(\Qbb)K_\Tbf$ up to some constant $c_0$ that only depends on $\dim\Gbf$.

When $w\neq 0$, it suffices to shrink $K_\Gbf$ to $K_\Gbf(w)$ and replace $C(\Tbf,K_\Gbf)$ by $C(\Tbf,K_\Gbf(w))$. But $$[K_\Tbf:K_\Tbf(w)]\leq[K_\Gbf:K_\Gbf(w)]\leq\#(\Aut(K_\Ubf[w]/K_\Ubf)\times\Aut(K_\Vbf[w]/K_\Vbf))$$ which is bounded by $\ord(w,K_\Wbf)^d$ as is desired.
\end{proof}






\end{document}